\begin{document}
\def\eq#1{{\rm(\ref{#1})}}
\theoremstyle{plain}
\newtheorem*{theo}{Theorem}
\newtheorem*{ack}{Acknowledgements}
\newtheorem*{pro}{Proposition}
\newtheorem*{coro}{Corollary}
\newtheorem*{lemm}{Lemma}
\newtheorem{thm}{Theorem}[section]
\newtheorem{lem}[thm]{Lemma}
\newtheorem{prop}[thm]{Proposition}
\newtheorem{cor}[thm]{Corollary}
\theoremstyle{definition}
\newtheorem{dfn}[thm]{Definition}
\newtheorem*{rem}{Remark}
\def\coker{\mathop{\rm coker}}
\def\codim{\mathop{\rm codim}}
\def\ind{\mathop{\rm ind}}
\def\Re{\mathop{\rm Re}}
\def\Vol{\rm Vol}
\def\Im{\mathop{\rm Im}}
\def\im{\mathop{\rm im}}
\def\sp{\mathop{\rm span}}
\def\Hol{{\textstyle\mathop{\rm Hol}}}
\def\C{{\mathbin{\mathbb C}}}
\def\R{{\mathbin{\mathbb R}}}
\def\N{{\mathbin{\mathbb N}}}
\def\Z{{\mathbin{\mathbb Z}}}
\def\O{{\mathbin{\mathbb O}}}
\def\L{{\mathbin{\mathcal L}}}
\def\X{{\mathbin{\mathcal X}}}
\def\al{\alpha}
\def\be{\beta}
\def\ga{\gamma}
\def\de{\delta}
\def\ep{\epsilon}
\def\io{\iota}
\def\ka{\kappa}
\def\la{\lambda}
\def\ze{\zeta}
\def\th{\theta}
\def\vt{\vartheta}
\def\vp{\varphi}
\def\si{\sigma}
\def\up{\upsilon}
\def\om{\omega}
\def\De{\Delta}
\def\Ga{\Gamma}
\def\Th{\Theta}
\def\La{\Lambda}
\def\Om{\Omega}
\def\Up{\Upsilon}
\def\sm{\setminus}
\def\na{\nabla}
\def\pd{\partial}
\def\op{\oplus}
\def\ot{\otimes}
\def\bigop{\bigoplus}
\def\iy{\infty}
\def\ra{\rightarrow}
\def\longra{\longrightarrow}
\def\dashra{\dashrightarrow}
\def\t{\times}
\def\w{\wedge}
\def\bigw{\bigwedge}
\def\d{{\rm d}}
\def\bs{\boldsymbol}
\def\ci{\circ}
\def\ti{\tilde}
\def\ov{\overline}
\def\sv{\star\vp}
\title{Subspaces of Multisymplectic Vector Spaces}

\author{A. J. Todd}

\address {Department of Mathematics, University of California - Riverside, Riverside, CA, 92521}
\email{ajtodd@math.ucr.edu}

\begin{abstract}
A notion of orthogonality in multisymplectic geometry has been developed by \cite{CIdL2} and used by many authors. In this paper, we review this concept and propose a new type of orthogonality in multisymplectic geometry; we prove a number of results regarding this orthogonality and its associated subspaces. We end by calculating the various subspaces of a $G_2$-vector space $(V^7,\vp)$ based on both types of orthogonality.
\end{abstract}

\date{}
\maketitle
\section{Introduction}
Let $V$ be an $n$-dimensional vector space, and let $\om$ be an exterior $(k+1)$-form on $V$ satisfying the nondegeneracy condition 
\begin{equation}
v\lrcorner\om=0\text{ iff  }v=0
\label{vnd}
\end{equation}
for $v\in V$. The pair $(V,\om)$ is called a \emph{multisymplectic vector space of degree $k+1$}. Such vector spaces are the tangent spaces to a \emph{multisymplectic manifold} $(M,\om)$ which is an $n$-dimensional smooth manifold $M$ together with a closed differential $(k+1)$-form $\om$ on $M$ satisfying the nondegeneracy condition 
\begin{equation}
X\lrcorner\om=0\text{ iff  }X=0
\label{vfnd}
\end{equation}
for $X$ a vector field on $M$. The pair $(M,\om)$ can be viewed as a natural generalization of symplectic manifolds which, in this language, are even-dimensional multisymplectic manifolds of degree $2$. Exact multisymplectic manifolds, that is, multisymplectic manifolds where the multisymplectic $(k+1)$-form is exact, arise naturally in physics as bundles of higher-degree differential forms equipped with an exact multisymplectic form, called multiphase spaces which are themselves generalizations of the standard phase space given by the cotangent bundle equipped with the canonical symplectic form, e. g. \cite{CIdL1, CIdL2, FPR2, FPR1, FoRo, GoIsMa}; indeed much of the interest in the subject of multisymplectic geometry has come from various areas of physics, e. g. \cite{AtWi, BHR, BR, CCI, EEMLRR1, EEMLRR2, GoVa, GYZ, He, HeKo1, HeKo2, HeKo3, PaRo, Vey}.

For a multisymplectic vector space of degree $k+1>2$, there exist stronger nondegeneracy conditions than the one given in Equation \eq{vnd}. An exterior form $\om$ of degree $k+1$ on a vector space $V$ is said to be \emph{$r$-nondegenerate} if $\om$ satisfies
\begin{equation}
 (v_1\w\cdots\w v_r)\lrcorner\om=0\text{ if and if only }v_1\w\cdots\w v_r=0
\end{equation}
for all $v_1\w\cdots\w v_r\in G^rV$ where $G^rV=\{v_1\w\cdots\w v_r:v_i\in V\}$ is the set of \emph{decomposable $r$-multivectors}, that is, $r$-fold wedge products of vectors in $V$. Madsen and Swann \cite{MaSw1} consider the cases for $r=1$ and $r=k$ and are what they refer to as \emph{weakly nondegenerate} and \emph{fully nondegenerate} respectively. When we wish to emphasize the $r$-nondegeneracy of the multisymplectic form, we will call $(V,\om)$ \emph{$r$-multisymplectic}. It will be understood that ``multisymplectic'' without further qualification will always refer to ``$1$-multisymplectic''.

\cite[Theorem $2.2$]{MaSw1} states that a vector space of dimension $n$ admits a fully nondegenerate form degree $k+1$ if and only if $k+1=n$ or the pair $(k+1,n)$ is either $(3,7)$ or $(4,8)$. The case where $k+1=n$ corresponds to $\om$ being a volume form; the case where $(k+1,n)=(3,7)$ corresponds to a \emph{$G_2$-vector space $(V,\vp)$} which is a $7$-dimensional vector space $V$ with an exterior $3$-form $\vp$, a two-fold vector cross product $\t$ and an inner product $\langle\cdot,\cdot\rangle_{\vp}$ satisfying the condition
\begin{equation}
 (u\w v\w w)\lrcorner\vp=\langle u\t v, w\rangle_{\vp}
\end{equation}
This is a $7$-dimensional, $2$-multisymplectic vector space of degree $3$. Such vector spaces arise as the tangent spaces to \emph{$7$-manifolds with $G_2$-structure}. Let $M$ be a $7$-dimensional manifold admitting a smooth differential $3$-form $\vp$ such that, for all $p\in M$, the pair $(T_pM,\vp)$ is isomorphic as an oriented vector space to the pair $(\R^7,\vp_0)$ where 
\begin{equation}
\vp_0=\d x^{123}+\d x^{145}+\d x^{167}+\d x^{246}-\d x^{257}-\d x^{347}-\d x^{356}
\label{G23form}
\end{equation}
with $\d x^{ijk}=\d x^i\w \d x^j\w \d x^k$. In \cite{Br1}, it is shown that the Lie group $G_2$ can be defined as the set of all elements of $GL(7,\R)$ that preserve $\vp_0$, so for a manifold admitting such a $3$-form, there is a reduction in the structure group of the tangent bundle to the exceptional Lie group $G_2$; hence, the pair $(M,\vp)$ is called a manifold with $G_2$-structure. Using the theory of $G$-structures and the inclusion of $G_2$ in $SO(7)$, all manifolds with $G_2$-structure are necessarily orientable and spin, any orientable $7$-manifold with spin structure admits a $G_2$-structure, and associated to a given $G_2$-structure $\vp$ are a metric $g_{\vp}$ called the \emph{$G_2$-metric}, satisfying
\begin{equation}
(X\lrcorner\vp)\w(Y\lrcorner\vp)\w\vp=6g_{\vp}(X,Y)\Vol_{\vp}
\label{G2metric}
\end{equation}
for any vector fields $X$ and $Y$ on $M$ and a $2$-fold vector cross product $\t$ satisfying
\begin{equation}
 (X\w Y\w Z)\lrcorner\vp=g_{\vp}(X\t Y,Z)
\end{equation}
for any vector fields $X$, $Y$ and $Z$ on $M$. When $\d\vp=0$, we say that $(M,\vp)$ is a manifold with \emph{closed $G_2$-structure}, and we have a $7$-dimensional, $2$-multisymplectic manifold of degree $3$. See \cite{Jo1, Jo2, Sa} for more information on these constructions and conditions.

Fundamental to the study of symplectic geometry is the notion of orthogonal complement with respect to the symplectic form as it is on this notion that the definitions of isotropic, coisotropic and Lagrangian subspaces are based. Extending this in natural way, Cantrijn, Ibort and de Le\'on \cite{CIdL1, CIdL2} define what we will call the \emph{Type-I $l^{th}$-orthogonal complement} of a linear subspace $W$ of the multisymplectic vector space $(V,\om)$ by
\begin{equation}
W_I^{\perp,l}=\left\{v\in V:\left(v\w w_1\w\cdots\w w_l\right)\lrcorner\om=0\text{ for all }w_1,\ldots,w_l\in W\right\}.
\label{WIperp}
\end{equation}
for $1\leq l\leq k$. When $k=1$, then we recover the usual definition of symplectic orthogonal complement. Using this notion of orthogonality, a subspace $W$ of a multisymplectic vector space $(V,\om)$ will be called \emph{Type-I $l$-isotropic} if $W\subseteq W_I^{\perp,l}$, \emph{Type-I $l$-coisotropic} if $W_I^{\perp,l}\subseteq W$, \emph{Type-I $l$-Lagrangian} if $W=W_I^{\perp,l}$. Unraveling the definitions shows that
\begin{equation}
 W\cap W_I^{\perp,k}=\ker(\om|_W)=\{w\in W:w\lrcorner(\om|_W)=0\}
\label{wcapwperpk}
\end{equation}
Hence, a subspace $W$ is \emph{multisymplectic} if $W\cap W_I^{\perp,k}=\{0\}$. This notion of orthogonality has been used in several articles, cf. e. g. \cite{CGM, EEdLMLRR, Gome, Rog, Sha}. 

The main purpose of this paper then is to consider the following new notion of orthogonality in a multisymplectic vector space $(V,\om)$ and to calculate the various Type-I and Type-II subspaces for a $G_2$-vector space $(V,\vp)$.
\begin{dfn}
For any subspace $W$, we define the \emph{Type-II $l^{th}$-orthogonal complement} of $W$ (with respect to $\om$) by
\begin{equation}
 W_{II}^{\perp,l}=\{v_1\w\cdots\w v_l\in G^lV:(v_1\w\cdots\w v_l \w w)\lrcorner\om=0\text{ for all }w\in W\}
 \label{typeiidefn}
\end{equation}
for any $1\leq l\leq n$.
\end{dfn}

The rest of the paper is organized as follows. In Section \ref{TypeI}, we review the results, with proof, of \cite{CIdL1} on Type-I orthogonality and the associated subspaces which we then adapt in Section \ref{TypeII} to Type-II orthogonality and the associated subspaces to get the following results.

\begin{pro}
Let $(V,\om)$ be a multisymplectic vector space of degree $k+1$, and let $U$, $W$ be any subspaces of $V$. Then for any $1\leq l\leq n$
\begin{equation}
 \{0\}_{II}^{\perp,l}=G^lV
\end{equation}
\begin{equation}
\om\text{ is $r$-nondegenerate if and only if }V_{II}^{\perp,r}=\{0\}.
\end{equation}
\begin{equation}
\text{If }\om|_W\text{ is $r$-nondegenerate, then }G^rW\cap W_{II}^{\perp,r}=\{0\}
\end{equation}
\begin{equation}
\om|_W\text{ is fully nondegenerate if and only if }G^kW\cap W_{II}^{\perp,k}=\{0\}
\end{equation}
\begin{equation}
W_{II}^{\perp,l}=G^lV\text{ for }l\geq k+1 
\end{equation}
\begin{equation}
\text{If }U\subseteq W\text{ then }W_{II}^{\perp,l}\subseteq U_{II}^{\perp,l}
\end{equation}
\begin{equation}
 (U+W)_{II}^{\perp,l} = U_{II}^{\perp,l}\cap W_{II}^{\perp, l}
\end{equation}
\end{pro}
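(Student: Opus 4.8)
The plan is to prove the two inclusions separately, arguing element-wise since the Type-II complements are subsets of $G^lV$ rather than linear subspaces. For the inclusion $(U+W)_{II}^{\perp,l}\subseteq U_{II}^{\perp,l}\cap W_{II}^{\perp,l}$, I would simply invoke the monotonicity statement established just above. Since $U\subseteq U+W$ and $W\subseteq U+W$, order-reversal of the Type-II complement gives $(U+W)_{II}^{\perp,l}\subseteq U_{II}^{\perp,l}$ and $(U+W)_{II}^{\perp,l}\subseteq W_{II}^{\perp,l}$ simultaneously, and intersecting these yields the claimed containment.

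For the reverse inclusion, I would fix an arbitrary decomposable $\xi=v_1\w\cdots\w v_l$ lying in $U_{II}^{\perp,l}\cap W_{II}^{\perp,l}$ and verify directly that $\xi\in(U+W)_{II}^{\perp,l}$. Given any $x\in U+W$, write $x=u+w$ with $u\in U$ and $w\in W$. The key (and essentially only) computation is that the map $y\mapsto(\xi\w y)\lrcorner\om$ is linear in its vector argument $y$, so that
\begin{equation*}
(\xi\w x)\lrcorner\om=(\xi\w u)\lrcorner\om+(\xi\w w)\lrcorner\om.
\end{equation*}
By hypothesis $\xi\in U_{II}^{\perp,l}$ forces the first term to vanish and $\xi\in W_{II}^{\perp,l}$ forces the second to vanish, so $(\xi\w x)\lrcorner\om=0$ for every $x\in U+W$; hence $\xi\in(U+W)_{II}^{\perp,l}$, completing the proof.

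I do not expect a genuine obstacle here. The only point requiring care is conceptual rather than computational: because $U_{II}^{\perp,l}$, $W_{II}^{\perp,l}$ and their intersection are merely subsets of the decomposables $G^lV$ and need not be closed under addition, the argument must be carried out on a single fixed $\xi$ at a time. The linearity driving the proof is linearity in the \emph{vector} slot $x$ of a fixed multivector $\xi$, and not any linearity in $\xi$ itself, so no appeal to a vector-space structure on the complements is needed or available.
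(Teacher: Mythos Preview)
Your argument for the final equality $(U+W)_{II}^{\perp,l}=U_{II}^{\perp,l}\cap W_{II}^{\perp,l}$ is correct and is exactly the routine verification the paper has in mind; the paper itself gives no proof of this proposition, merely declaring all seven properties ``straightforward from the definition of Type-II orthogonality.'' Your care about working with a fixed decomposable $\xi$ and using only linearity in the vector slot is well placed and is all that is needed.

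Note, however, that the stated proposition has seven parts and your proposal addresses only the last one. If the full statement is the target, you should also supply the (equally brief) checks of the remaining items: \eq{typeii0perp} and \eq{typeiisubseteqperp} are immediate from the defining condition being vacuous or stronger, \eq{typeiiVperp}--\eq{typeiiWknd} are direct translations of the various nondegeneracy definitions, and $W_{II}^{\perp,l}=G^lV$ for $l\geq k+1$ holds because contracting a $(k+1)$-form with more than $k+1$ vectors is automatically zero. None of these requires a new idea beyond unwinding the definition.
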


\begin{pro}
 Let $W$ be a subspace of a multisymplectic vector space $(V,\om)$.
 \begin{enumerate}
  \item If $W$ is Type-II $l$-isotropic, then $W$ is Type-II $l'$-isotropic for all $l'\geq l$.
  \item If $W$ is Type-II $l$-coisotropic, then $W$ is Type-II $l''$-coisotropic for all $l'' \leq l$.
  \item $W$ is Type-II $l$-isotropic for all $l\geq\dim{W}$.
  \item Every subspace containing a Type-II $l$-coisotropic subspace is Type-II $l$-coisotropic.
  \item Every subspace that is contained in a Type-II $l$-isotropic subspace is Type-II $l$-isotropic.
 \end{enumerate}
\end{pro}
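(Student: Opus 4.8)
The plan is to dispose of the five parts in increasing order of difficulty, isolating one elementary lemma on interior products that powers the first part and reappears in the second. Parts (4) and (5) are immediate from the previous proposition: for (5), if $W\subseteq I$ with $I$ Type-II $l$-isotropic, then $G^lW\subseteq G^lI\subseteq I_{II}^{\perp,l}\subseteq W_{II}^{\perp,l}$, where the middle inclusion is the hypothesis and the last is the antitone inclusion ``$U\subseteq W\Rightarrow W_{II}^{\perp,l}\subseteq U_{II}^{\perp,l}$'' of the preceding proposition applied to $U=W\subseteq I$; dually, for (4), if $C\subseteq W$ with $C$ Type-II $l$-coisotropic, then $W_{II}^{\perp,l}\subseteq C_{II}^{\perp,l}\subseteq G^lC\subseteq G^lW$. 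Part (3) is a dimension count: for $l\geq\dim W$, any $l+1$ vectors drawn from $W$ are linearly dependent, so $w_1\w\cdots\w w_l\w w=0$ for all $w\in W$; hence every $w_1\w\cdots\w w_l\in G^lW$ lies in $W_{II}^{\perp,l}$, giving $G^lW\subseteq W_{II}^{\perp,l}$.

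The engine for part (1) is the observation that interior multiplication factors through the wedge, $(\xi\w\ze)\lrcorner\om=\pm\,\ze\lrcorner(\xi\lrcorner\om)$. From this I extract a bootstrapping lemma: if $\al\in W_{II}^{\perp,m}$ and $u_1,\ldots,u_j\in W$, then $\al\w u_1\w\cdots\w u_j\in W_{II}^{\perp,m+j}$. Indeed, taking the single vector $u_1\in W$ in the defining condition for $\al$ gives $(\al\w u_1)\lrcorner\om=0$, and then for any $w\in W$ one factors $(\al\w u_1\w\cdots\w u_j\w w)\lrcorner\om=\pm(u_2\w\cdots\w u_j\w w)\lrcorner\bigl[(\al\w u_1)\lrcorner\om\bigr]=0$. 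Part (1) is then immediate: given $W$ Type-II $l$-isotropic and $l'\geq l$, any $w_1\w\cdots\w w_{l'}\in G^{l'}W$ has its initial segment $w_1\w\cdots\w w_l\in G^lW\subseteq W_{II}^{\perp,l}$, and appending $w_{l+1},\ldots,w_{l'}\in W$ via the bootstrapping lemma places it in $W_{II}^{\perp,l'}$.

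Part (2) is the crux and will require the most care, because $W_{II}^{\perp,l''}$ consists of decomposable vectors and ``$l''$-coisotropic'' asks that each such vector lie in $G^{l''}W$, i.e. have support inside $W$. The strategy is, starting from a nonzero $\al=v_1\w\cdots\w v_{l''}\in W_{II}^{\perp,l''}$ with support $S=\sp(v_1,\ldots,v_{l''})$, to extend $\al$ by vectors of $W$ up to level $l$, apply $l$-coisotropy, and read off the support. Concretely, if I can choose $u_1,\ldots,u_{l-l''}\in W$ independent modulo $S$, then $\be=\al\w u_1\w\cdots\w u_{l-l''}$ is a nonzero decomposable $l$-vector lying in $W_{II}^{\perp,l}$ by the bootstrapping lemma; coisotropy forces $\be\in G^lW$, so its support $S+\sp(u_1,\ldots,u_{l-l''})$ sits inside $W$, whence $S\subseteq W$ and $\al\in G^{l''}W$. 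Here I use the standard fact that a nonzero decomposable $p$-vector lies in $G^pU$ precisely when its support is contained in $U$.

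The one gap is whether such $u_i$ exist, i.e. whether $\dim(W/(W\cap S))\geq l-l''$; this is exactly where I expect the main obstacle. I will close it with a preliminary dimension lemma, valid in the nontrivial range $1\leq l\leq k$ (for $l\geq k+1$ one has $W_{II}^{\perp,l}=G^lV$ and coisotropy degenerates): a Type-II $l$-coisotropic $W$ must satisfy $\dim W\geq l$. To see this, suppose $\dim W=m<l$; since $\om$ is a genuine $(k+1)$-form we have $l\leq k<n$, so a basis $w_1,\ldots,w_m$ of $W$ extends to independent $w_1,\ldots,w_m,y_1,\ldots,y_{l-m}$ in $V$. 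The vector $\eta=w_1\w\cdots\w w_m\w y_1\w\cdots\w y_{l-m}$ is nonzero, and $\eta\w w=0$ for every $w\in W$ because $w$ is dependent on $w_1,\ldots,w_m$; thus $\eta\in W_{II}^{\perp,l}$, yet its support has dimension $l>\dim W$, so $\eta\notin G^lW$, contradicting coisotropy. Granting $\dim W\geq l$ and $\dim(W\cap S)\leq\dim S=l''$, we obtain $\dim(W/(W\cap S))\geq l-l''$, so the required $u_i$ exist and the extension argument above completes part (2).
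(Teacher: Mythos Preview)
Your proof is correct. Parts (1), (3), (4), (5) match the paper almost verbatim: the paper proves (1) by the same factorisation of the interior product, (3) by the same wedge-vanishing argument for $l=\dim W$, and (4)--(5) by exactly the chains
\[
W_{II}^{\perp,l}\subseteq U_{II}^{\perp,l}\subseteq G^lU\subseteq G^lW
\quad\text{and}\quad
G^lU\subseteq G^lW\subseteq W_{II}^{\perp,l}\subseteq U_{II}^{\perp,l}
\]
that you wrote down.

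Part (2) is where you diverge. Your argument is sound, but the paper's is shorter because it extends $\al=v_1\w\cdots\w v_{l''}$ by vectors from $V$ rather than from $W$. The point you nearly had is that your bootstrapping lemma holds in the stronger form ``$\al\in W_{II}^{\perp,m}$ and $\be\in G^jV$ imply $\al\w\be\in W_{II}^{\perp,m+j}$'': for any $w\in W$ one factors
\[
(\al\w\be\w w)\lrcorner\om=\pm\,\be\lrcorner\bigl[(\al\w w)\lrcorner\om\bigr]=0,
\]
pulling the $w$ (not a $u_i$) next to $\al$ first. The paper records this stronger absorption property just before the proposition. With it, one simply chooses any $v_{l''+1},\ldots,v_l\in V$ extending $\al$ to a nonzero $l$-vector (possible since $l\le n$); this lies in $W_{II}^{\perp,l}$, hence in $G^lW$ by hypothesis, forcing the support of $\al$ into $W$. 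Your route---extending only by vectors of $W$ and therefore needing the auxiliary dimension lemma $\dim W\ge l$---works, but that lemma and the case split $l\le k$ versus $l\ge k+1$ are avoidable overhead. What your approach does buy is the side observation that (in the nontrivial range) Type-II $l$-coisotropic subspaces have dimension at least $l$, which the paper does not isolate.
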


\begin{pro}
 Let $(V,\om)$ be an $n$-dimensional multisymplectic vector space of degree $k+1$. Then
 \begin{enumerate}
  \item Each subspace of dimension $l$ is $l$-isotropic.
  \item If $\om$ is $r$-nondegenerate, and if $W$ is a $r$-isotropic subspace of $V$, then $\codim{W}\geq k+1-r$ (and hence, $\codim(W)\geq k$)
  \item If $k+1=n$ (that is, $\om$ is a volume form for $V$) then every subspace $W$ of $V$ is Type-II $l$-Lagrangian, with $l=\dim{W}$ and, moreover, $W_{II}^{\perp,l'}=\{0\}$ for all $l'<l$.
 \end{enumerate}
\end{pro}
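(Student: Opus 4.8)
The plan is to handle the three parts in turn, writing $m=\dim W$ throughout and using the contraction identity $(\eta\w v)\lrcorner\om=\pm\,v\lrcorner(\eta\lrcorner\om)$ for $\eta\in G^lV$, $v\in V$; since only vanishing will matter, the signs are irrelevant. Part (1) is immediate: if $m=l$, then for any $w_1\w\cdots\w w_l\in G^lW$ and any $w\in W$ the $l+1$ vectors $w_1,\dots,w_l,w$ lie in the $l$-dimensional space $W$, hence are dependent, so $w_1\w\cdots\w w_l\w w=0$ and therefore $(w_1\w\cdots\w w_l\w w)\lrcorner\om=0$. Thus $G^lW\subseteq W_{II}^{\perp,l}$, which is exactly Type-II $l$-isotropy.

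Part (2) carries the real content, and the idea is to read off a dimension bound from where the contractions $\eta\lrcorner\om$ can live. If $m\le r-1$ the bound is automatic, since $\codim W=n-m\ge n-r+1\ge(k+1)-r+1>k+1-r$, using $n\ge k+1$ (forced by $\om$ being a nonzero $(k+1)$-form). So assume $m\ge r$, choose linearly independent $w_1,\dots,w_r\in W$, and set $\eta=w_1\w\cdots\w w_r\ne0$ and $\be=\eta\lrcorner\om\in\La^{k+1-r}V^*$. Type-II $r$-isotropy gives $(\eta\w w)\lrcorner\om=0$, hence $w\lrcorner\be=0$, for every $w\in W$; expressing $\be$ in a basis adapted to $W$, this says exactly that $\be\in\La^{k+1-r}\mathrm{Ann}(W)$, where $\mathrm{Ann}(W)\subseteq V^*$ has dimension $\codim W$. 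But $\eta\ne0$ together with the $r$-nondegeneracy of $\om$ forces $\be=\eta\lrcorner\om\ne0$, so $\La^{k+1-r}\mathrm{Ann}(W)\ne\{0\}$ and therefore $\codim W=\dim\mathrm{Ann}(W)\ge k+1-r$. This is the asserted inequality; for $r=1$ it specializes to $\codim W\ge k$.

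For part (3) the new ingredient is that a volume form gives a perfect pairing: when $\deg\om=n=k+1$, the map $\zeta\mapsto\zeta\lrcorner\om$ is an isomorphism $\La^pV\to\La^{n-p}V^*$, so $\zeta\lrcorner\om=0$ if and only if $\zeta=0$ for all $\zeta\in\La^pV$. With $l=\dim W$, part (1) gives $G^lW\subseteq W_{II}^{\perp,l}$; conversely, if $\eta=v_1\w\cdots\w v_l\in W_{II}^{\perp,l}$ then $(\eta\w w)\lrcorner\om=0$, hence $\eta\w w=0$, for every $w\in W$. If $\eta\ne0$ then $\sp\{v_1,\dots,v_l\}$ is $l$-dimensional and contains all of $W$, so it equals $W$ and $\eta\in G^lW$ (and $\eta=0$ lies in $G^lW$ trivially). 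Hence $W_{II}^{\perp,l}=G^lW$, i.e. $W$ is Type-II $l$-Lagrangian. The same computation gives the last claim: for $l'<l$, any $v_1\w\cdots\w v_{l'}\in W_{II}^{\perp,l'}$ satisfies $(v_1\w\cdots\w v_{l'})\w w=0$ for all $w\in W$, and a nonzero such product would force the $l'$-dimensional $\sp\{v_1,\dots,v_{l'}\}$ to contain the $l$-dimensional $W$, which is impossible; hence $W_{II}^{\perp,l'}=\{0\}$.

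I expect the only real obstacle to be in part (2): pinning down that $r$-isotropy is equivalent to each $\eta\lrcorner\om$ (for $\eta\in G^rW$) annihilating $W$, and hence lying in $\La^{k+1-r}\mathrm{Ann}(W)$, and then combining this with $r$-nondegeneracy to extract the codimension bound. Getting the contraction identity and the adapted-basis description of the annihilator exactly right, and separating the trivial case $m<r$, are the points that need care; parts (1) and (3) are routine once the volume-form pairing is invoked.
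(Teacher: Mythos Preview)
Your proof is correct and follows essentially the same approach as the paper. The only notable difference is in part~(2): the paper chooses a complement $U$ with $V=W\oplus U$ and argues that the nonzero $(k+1-r)$-form $(w_1\w\cdots\w w_r)\lrcorner\om$, which vanishes on $W$, must be nonzero on some $(k+1-r)$-tuple from $U$, forcing $\dim U\ge k+1-r$; you recast this dually by observing that the form lies in $\La^{k+1-r}\mathrm{Ann}(W)$, which is the same argument in annihilator language. Your version has the minor advantage of explicitly disposing of the edge case $\dim W<r$, which the paper's proof tacitly assumes away by picking a nonzero $r$-fold wedge in $G^rW$.
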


Section \ref{Theorem1} is devoted entirely to proving the following theorem 
\begin{thm}
Let $(V,\vp)$ be a $G_2$-vector space, and let $W$ be a subspace of $V$.
\begin{enumerate}
    \item For $\dim W=1$, $W$ is $1$-Lagrangian and both Type-I and Type-II $2$-isotropic.
    \item For $\dim W\geq2$, $W$ is $1$-coisotropic.
    \item For $\dim W=2$, $W$ is Type-I and Type-II $2$-isotropic.
    \item For $\dim W=3$,
    \begin{itemize}
        \item if $(W,\t)$ is isomorphic to $(\Im\mathbb{H},\t_0)$, $W$ is $2$-multisymplectic.
        \item otherwise, $W$ is Type-I and Type-II $2$-isotropic.
    \end{itemize}
    \item For $\dim W=4$,
    \begin{itemize}
        \item if $W$ contains a $3$-dimensional subspace $(\tilde{W},\t)$ isomorphic to $(\Im\mathbb{H},\t_0)$, $W$ is Type-I $2$-coisotropic but is none of Type-II $2$-isotropic, Type-II $2$-coisotropic nor $2$-multisymplectic.
        \item otherwise, $W$ is Type-I $2$-Lagrangian and Type-II $2$-isotropic.
    \end{itemize}
    \item For $\dim W=5$, $W$ is Type-I $2$-coisotropic and $1$-multisymplectic, but is none of Type-II $2$-isotropic, Type-II $2$-coisotropic nor Type-II $2$-multisymplectic.
    \item For $\dim W=6$, $W$ is Type-I and Type-II $2$-coisotropic and $1$-multisymplectic.
\end{enumerate}
\end{thm}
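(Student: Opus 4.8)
The plan is to reformulate both orthogonal complements, extract two structural facts, and then run through the dimensions, invoking $G_2$-invariance to pass to a normal form whenever a single orbit is available. Contracting $\vp$ once gives $W_I^{\perp,2}=\{u\in V:\iota_u\vp|_W=0\}$, so $u\in W_I^{\perp,2}$ exactly when the $2$-form $\om_u:=\iota_u\vp=\langle u\t\cdot,\cdot\rangle_\vp$ vanishes on $W$; and, using $(v_1\w v_2\w w)\lrcorner\vp=\langle v_1\t v_2,w\rangle_\vp$, one gets
\begin{equation*}
W_{II}^{\perp,2}=\{v_1\w v_2\in G^2V:\ v_1\t v_2\perp W\}.
\end{equation*}
Two consequences organize the isotropy statements: $W$ is Type-I $2$-isotropic iff $\vp|_W=0$ iff $W$ is Type-II $2$-isotropic (both assert $\langle v_1\t v_2,w\rangle_\vp=0$ for all $v_1,v_2,w\in W$); and since $1$-multisymplectic means $W\cap W_I^{\perp,2}=\{0\}$, the entire Type-I content is controlled by $W_I^{\perp,2}$.

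The main input on the Type-I side is that for $u\neq0$ the form $\om_u$ has rank $6$ with radical $\sp\{u\}$, since $\iota_u\om_u=0$ and $v\mapsto u\t v$ is injective on $u^\perp$. Thus any subspace on which $\om_u$ vanishes is $\om_u$-isotropic, hence of dimension at most $4$, and a $4$-dimensional one must contain $\sp\{u\}$. This settles the Type-I side uniformly: for $\dim W\geq5$ we get $W_I^{\perp,2}=\{0\}$, giving Type-I $2$-coisotropy and $1$-multisymplecticity in dimensions $5$ and $6$; for $\dim W=4$ we get $W_I^{\perp,2}\subseteq W$ (Type-I $2$-coisotropic), which in the coassociative case $\vp|_W=0$ upgrades to $W=W_I^{\perp,2}$, i.e. Type-I $2$-Lagrangian, once combined with $W\subseteq W_I^{\perp,2}$. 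For $\dim W=1$ I take $W=\sp\{e_1\}$ by transitivity on lines and compute $W_I^{\perp,1}=\{u:u\t e_1=0\}=W$, so $W$ is $1$-Lagrangian; and for $\dim W\geq2$ two independent vectors of $W$ force $W_I^{\perp,1}=\{0\}$, giving $1$-coisotropy.

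For the Type-II statements and the finer dichotomies I would argue dimension by dimension from $\vp_0$ and the multiplication table $e_1\t e_2=e_3,\ e_1\t e_4=e_5,\dots$. Dimensions $1$ and $2$ are immediate since $\vp|_W=0$. In dimension $3$, $\vp|_W=c\,\Vol_W$ is a multiple of the volume form and $(W,\t)\cong(\Im\mathbb{H},\t_0)$ exactly when $c\neq0$; then $\vp|_W$ is a nonzero top-form, hence fully nondegenerate, so $G^2W\cap W_{II}^{\perp,2}=\{0\}$ by the full nondegeneracy criterion of the first Proposition and $W$ is $2$-multisymplectic, whereas $c=0$ gives $\vp|_W=0$ and both isotropies. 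In dimension $4$ the split is $\vp|_W\neq0$ versus $\vp|_W=0$: the coassociative case was handled above and is Type-II $2$-isotropic; when $\vp|_W\neq0$, nonvanishing rules out Type-II $2$-isotropy, the Madsen--Swann classification (no fully nondegenerate $3$-form exists on $\R^4$, as $(3,4)$ is not among $n=k+1,(3,7),(4,8)$) forces $G^2W\cap W_{II}^{\perp,2}\neq\{0\}$ and hence failure of Type-II $2$-multisymplecticity, and a decomposable $v_1\w v_2$ with $v_1\t v_2\perp W$ but $\sp\{v_1,v_2\}\not\subseteq W$ defeats Type-II $2$-coisotropy. Dimension $5$ (a single orbit, the complement of a $2$-plane) is finished by explicit witnesses at $W=\sp\{e_3,\dots,e_7\}$, and dimension $6$ by noting that $W^\perp$ is a line, so $v_1\t v_2\in W^\perp$ already forces $v_1,v_2\perp(v_1\t v_2)$ into $W$, yielding Type-II $2$-coisotropy.

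I expect the $\dim W=4$ case with $\vp|_W\neq0$ to be the main obstacle. Unlike lines, planes, and their complements, the non-coassociative $4$-planes are not a single $G_2$-orbit: $|\vp|_W|\in(0,1]$ is an invariant and only $|\vp|_W|=1$ corresponds to $W$ containing a calibrated associative $3$-plane. I must therefore read ``$W$ contains $\tilde W\cong(\Im\mathbb{H},\t_0)$'' as the orbit-stable condition $\vp|_W\neq0$ and prove the three Type-II failures by arguments that do not depend on a normal form. The delicate point is producing, for every such $W$, a witness for non-coisotropy---equivalently showing the cone $\{v_1\t v_2\perp W\}$ is strictly larger than $G^2W$; I would fix a unit $n\in W^\perp$ (so $W\subseteq n^\perp$), use that $n\t\cdot$ is a complex structure on $n^\perp$, and select a plane transverse to $W$ whose cross product lands in $W^\perp$.
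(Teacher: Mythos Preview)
Your route is genuinely different from the paper's. The paper proceeds dimension by dimension by fixing an orthonormal basis of $W$, expanding cross products, and reading off the answer from explicit inner-product computations; it never invokes $G_2$-transitivity or the rank of $\iota_u\vp$. Your organizing observation---that $\om_u=\iota_u\vp$ has rank $6$ with radical $\sp\{u\}$, so any $\om_u$-isotropic subspace has dimension at most $4$ and a $4$-dimensional one must contain $u$---dispatches the entire Type-I column for $\dim W\ge 4$ in one stroke, where the paper builds bases of the form $\{e_1,e_2,e_3,f,e_i\t f,\ldots\}$ and argues case by case. Your $\dim W=6$ Type-II argument (if $v_1\t v_2$ spans $W^\perp$ then $v_i\perp v_1\t v_2$ forces $v_i\in W$) is likewise cleaner than the paper's, which identifies $v_1\w v_2$ as lying in a specific three-dimensional span. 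Using $G_2$-transitivity on single orbits (lines, $2$-planes, $5$-planes, hyperplanes) is legitimate and economical.

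There is, however, an internal inconsistency you must resolve. In the $\dim W=3$ paragraph you assert that $(W,\t)\cong(\Im\mathbb{H},\t_0)$ \emph{exactly when} $c\neq 0$, but in the $\dim W=4$ paragraph you correctly observe that only $|\vp|_W|=1$ corresponds to an associative (i.e.\ cross-product-closed) $3$-plane. These cannot both be right: the $3$-plane $\sp\{e_1,e_2,(e_3+e_4)/\sqrt2\}$ has $c=1/\sqrt2\neq 0$ yet is not closed under $\t$, hence not isomorphic to $(\Im\mathbb{H},\t_0)$ in the sense of the theorem. Under the theorem's literal dichotomy, the ``otherwise'' branch in dimension $3$ claims that every non-associative $3$-plane is $2$-isotropic, and your $c\neq 0$ argument does not establish that (indeed the example just given is neither $2$-multisymplectic in the associative sense nor $2$-isotropic). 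The paper's proof handles dimension $3$ by splitting on whether some $e_i\t e_j$ lies in $W$ for a chosen orthonormal basis, and then asserts that in the negative case $\{e_1,e_2,e_3,e_1\t e_2,e_1\t e_3,e_2\t e_3\}$ is orthonormal; you should compare your reinterpretation against that argument rather than silently replace the hypothesis. The same reinterpretation issue propagates to your $\dim W=4$ split, where you correctly flag that the non-coassociative $4$-planes are not a single orbit and that the Type-II non-coisotropy witness is the genuinely unfinished step.
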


This allows us to give the following characterizations of the associative and coassociative submanifolds of a $G_2$-manifold.

\begin{coro}
Let $(V,\vp)$ be a $G_2$-vector space. Then the associative subspaces are the $3$-dimensional subspaces which are $2$-multisymplectic and the coassociative subspaces are the $4$-dimensional subspaces which are Type-I $2$-Lagrangian/Type-II $2$-isotropic.
\end{coro}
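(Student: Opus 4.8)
The plan is to translate the two calibration-geometric classes into the multisymplectic language of the Theorem by unravelling the defining contraction conditions. I would first record the standard descriptions from $G_2$-geometry that I intend to use: a $3$-dimensional $W$ is \emph{associative} exactly when it is calibrated by $\vp$, i.e. $\vp|_W$ is the volume form of $(W,\langle\cdot,\cdot\rangle_\vp|_W)$, and a $4$-dimensional $W$ is \emph{coassociative} exactly when $\vp|_W=0$ (equivalently, $W$ is calibrated by $\sv$). Both statements are classical, and it is these that must be matched with ``$2$-multisymplectic'' and ``Type-I $2$-Lagrangian/Type-II $2$-isotropic'' respectively.

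For the associative subspaces I would use the classical equivalence that a $3$-plane is associative if and only if the cross product $\t$ restricts to a map $W\t W\to W$, in which case $(W,\t)$ carries a $2$-fold vector cross product and is therefore isomorphic to $(\Im\mathbb{H},\t_0)$. Granting this, nothing further is needed: the $\dim W=3$ clause of the Theorem states that a $3$-dimensional $W$ is $2$-multisymplectic precisely when $(W,\t)\cong(\Im\mathbb{H},\t_0)$, so the associative $3$-planes are exactly the $3$-dimensional $2$-multisymplectic subspaces.

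For the coassociative subspaces I would anchor the argument to Type-II $2$-isotropy. Unravelling the definition, $W$ is Type-II $2$-isotropic iff $G^2W\subseteq W_{II}^{\perp,2}$, that is $(w_1\w w_2\w w)\lrcorner\vp=0$ for all $w_1,w_2,w\in W$; since $(w_1\w w_2\w w)\lrcorner\vp=\langle w_1\t w_2,w\rangle_\vp$, this says exactly that $\vp|_W=0$. Thus, among $4$-dimensional subspaces, Type-II $2$-isotropic is literally the condition $\vp|_W=0$, i.e. coassociativity. It remains to see that these same $4$-planes are exactly the Type-I $2$-Lagrangian ones, which I verify directly below, yielding the stated combined characterization.

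The step I expect to demand the most care is verifying, in dimension four, that the orthogonality conditions cut out exactly the locus $\{\vp|_W=0\}$ and nothing larger; the point is that a $4$-plane can have $\vp|_W\neq0$ yet contain no associative $3$-subspace (for example $W=\sp(e_1+e_2,e_3,e_4,e_5)$), so the identification must be tied to the equation $\vp|_W=0$ itself rather than to the mere presence or absence of an associative subspace. Passing through Type-II $2$-isotropy makes this transparent. To confirm the companion Type-I statement $W=W_I^{\perp,2}$ when $W$ is coassociative, I would note that $\vp|_W=0$ gives $W\subseteq W_I^{\perp,2}$ immediately, while the reverse inclusion follows from the multiplication rule $W^\perp\t W\subseteq W$ on the associative complement $W^\perp$: for $0\neq v\in W^\perp$ and any nonzero $w\in W$ one has $v\t w\in W$ and $v\t w\neq0$, so $(v\w w\w(v\t w))\lrcorner\vp=\langle v\t w,v\t w\rangle_\vp\neq0$, forcing $W_I^{\perp,2}\cap W^\perp=\{0\}$ and hence $W_I^{\perp,2}=W$.
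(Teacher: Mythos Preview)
Your argument is correct. Both you and the paper reduce everything to the identity $(w_1\w w_2\w w_3)\lrcorner\vp=\langle w_1\t w_2,w_3\rangle_\vp$, and the associative half is essentially the same in both (you go via closure under $\t$ and the $\dim W=3$ clause of the Theorem; the paper argues ``$\vp|_A$ is a volume form $\Leftrightarrow$ $\vp|_A$ fully nondegenerate'' and then invokes the same dichotomy). The coassociative half is where you genuinely diverge: the paper obtains ``Type-I/II $2$-isotropic $\Rightarrow$ Type-I $2$-Lagrangian'' by appealing to the $\dim W=4$ clause of the Theorem, whereas you supply a self-contained argument via the rule $W^\perp\t W\subseteq W$ on the associative complement. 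Your parenthetical example is correct and instructive: in an orthonormal frame $W=\sp(e_1+e_2,e_3,e_4,e_5)$ has $\vp|_W\neq 0$, yet the maximum of $|\vp(a,b,c)|$ over orthonormal triples in $W$ is $\tfrac{1}{\sqrt 2}<1$, so $W$ contains no associative $3$-plane. This shows that ``contains an associative $3$-subspace'' is genuinely stronger than $\vp|_W\neq 0$, and your direct route avoids relying on that dichotomy. Two minor completions worth stating explicitly: the converse Type-I $2$-Lagrangian $\Rightarrow\vp|_W=0$ is immediate from $W\subseteq W_I^{\perp,2}$; and the step from $W_I^{\perp,2}\cap W^\perp=\{0\}$ to $W_I^{\perp,2}=W$ uses linearity together with $W\subseteq W_I^{\perp,2}$, so that for $v=w+w'$ with $w\in W$, $w'\in W^\perp$ one has $v\in W_I^{\perp,2}$ iff $w'\in W_I^{\perp,2}\cap W^\perp$.
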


\begin{coro}
Let $(M,\vp)$ be a $G_2$-manifold. Then the associative submanifolds are exactly those $3$-dimensional submanifolds which are $2$-multisymplectic, and the coassociative submanifolds are exactly those $4$-dimensional submanifolds which are Type-I $2$-Lagrangian/Type-II $2$-isotropic.
\end{coro}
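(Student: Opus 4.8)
The plan is to obtain this corollary as a pointwise consequence of the preceding Corollary on $G_2$-vector spaces, exploiting the fact that the associative and coassociative conditions on a submanifold are purely tangential. First I would recall the relevant definitions, following Harvey--Lawson and the references \cite{Jo1, Jo2, Sa}. An \emph{associative submanifold} of a $G_2$-manifold $(M,\vp)$ is a $3$-dimensional submanifold $N$ calibrated by $\vp$; equivalently, $\vp$ restricts to the volume form of $N$, which holds if and only if each tangent space $T_pN$ is an associative subspace of $T_pM$, i.e. $(T_pN,\t)$ is isomorphic to $(\Im\mathbb{H},\t_0)$. A \emph{coassociative submanifold} is a $4$-dimensional submanifold $N$ with $\vp|_N=0$, equivalently one whose tangent spaces $T_pN$ are all coassociative subspaces. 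In both cases the defining property is a condition imposed separately at each point on the subspace $T_pN$.

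Next I would set up the tangential framework. By the definition of a $G_2$-manifold, for every $p\in M$ the pair $(T_pM,\vp_p)$ is isomorphic as an oriented vector space to $(\R^7,\vp_0)$, and hence is a $G_2$-vector space. For a submanifold $N\subseteq M$, the tangent space $T_pN$ is then a subspace of the $G_2$-vector space $(T_pM,\vp_p)$, to which the subspace-level notions apply verbatim. Here I would make explicit the convention that the submanifold properties appearing in the statement are understood tangentially: $N$ is said to be \emph{$2$-multisymplectic} (respectively \emph{Type-I $2$-Lagrangian}, \emph{Type-II $2$-isotropic}) precisely when the subspace $T_pN\subseteq(T_pM,\vp_p)$ has the corresponding property for every $p\in N$. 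With this reading in place, each of the four conditions in the statement becomes a pointwise condition on the family $\{T_pN\}_{p\in N}$.

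I would then apply the preceding Corollary at each point. It identifies the associative subspaces of a $G_2$-vector space with the $3$-dimensional $2$-multisymplectic subspaces, and the coassociative subspaces with the $4$-dimensional subspaces that are Type-I $2$-Lagrangian and Type-II $2$-isotropic. Quantifying over $p\in N$ yields: a $3$-dimensional submanifold $N$ is associative if and only if $T_pN$ is $2$-multisymplectic for all $p$, i.e. $N$ is $2$-multisymplectic; and a $4$-dimensional submanifold $N$ is coassociative if and only if $T_pN$ is Type-I $2$-Lagrangian/Type-II $2$-isotropic for all $p$, which is the asserted characterization. The converse directions are immediate from the same equivalence read in the reverse direction.

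The main obstacle here is not analytic, since there is no global or smoothness issue once the conditions are recognized as pointwise; rather, the one point demanding care is the definitional bookkeeping: verifying that the geometric definitions of associative and coassociative submanifolds (via the cross product $\t$, or equivalently via the calibrations $\vp$ and $\sv$) agree at each tangent space with the notions of associative and coassociative \emph{subspace} used in the preceding Corollary, and recording the convention that the multisymplectic nondegeneracy properties of a submanifold are those of its tangent spaces. Once these identifications are fixed, the corollary follows directly.
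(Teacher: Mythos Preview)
Your proposal is correct and follows essentially the same approach as the paper: the paper states that the corollary is immediate from the preceding subspace-level corollary together with the fact that the tangent spaces of an associative (resp.\ coassociative) submanifold are associative (resp.\ coassociative) subspaces of $(T_xM,\vp_x)$. Your write-up is more detailed in spelling out the tangential conventions, but the logical content is the same pointwise reduction.
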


\section{Type-I Orthogonality}
\label{TypeI}

Note that there is the filtration of orthogonal complements
\begin{equation}
W_I^{\perp,1}\subseteq W_I^{\perp,2}\subseteq\cdots\subseteq W_I^{\perp,k},
\label{filtration}
\end{equation}
and if $l>\dim W$, then $W_I^{\perp,l}=V$. 

\begin{prop}[\cite{CIdL2}, \emph{Proposition $3.1$}]
Let $(V,\om)$ be a multisymplectic vector space of degree $k+1$, and let $U$, $W$ be any subspaces of $V$. Then for any $l,l_1,l_2\in\{1,\ldots,k\}$
\begin{equation}
 \{0\}_I^{\perp,l}=V
\label{cidl2311}
\end{equation}
\begin{equation}
 V_I^{\perp,l}=\{0\}
\label{cidl2312}
\end{equation}
\begin{equation}
\text{If }U\subseteq W\text{ then }W_I^{\perp,l}\subseteq U_I^{\perp,l}
\label{cidl2313}
\end{equation}
\begin{equation}
 (U+W)_I^{\perp,l}\subseteq U_I^{\perp,l}\cap W_I^{\perp, l}
\label{cidl2314}
\end{equation}
\begin{equation}
 U_I^{\perp,l_1}\cap W_I^{\perp,l_2}\subseteq (U+W)_I^{\perp,l_1+l_2-1}\text{ for }l_1+l_2\leq k+1
\label{cidl2315}
\end{equation}
\begin{equation}
U_I^{\perp,l_1}+W_I^{\perp,l_2}\subseteq (U\cap W)_I^{\perp,\ov{l}}\text{ with }\ov{l}=\max\{l_1,l_2\}
\label{cidl2316}
\end{equation}
\end{prop}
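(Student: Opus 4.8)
The plan is to treat the six inclusions in increasing order of difficulty, since the first four are essentially formal consequences of the definition \eq{WIperp} while the last two build on them. The one computational tool I would isolate at the outset is the contraction identity
\begin{equation}
(\alpha\w\beta)\lrcorner\om=\beta\lrcorner(\alpha\lrcorner\om)
\end{equation}
valid for any multivectors $\alpha,\beta$, whose immediate corollary is that $\alpha\lrcorner\om=0$ forces $(\alpha\w\beta)\lrcorner\om=0$ for every $\beta$. This principle, that vanishing is preserved under further wedging, is what makes the whole proposition go through.

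For \eq{cidl2311} I would observe that when $W=\{0\}$ the only admissible $w_1,\ldots,w_l$ are zero, so $v\w w_1\w\cdots\w w_l=0$ and the defining condition holds vacuously for every $v$; hence $\{0\}_I^{\perp,l}=V$. For \eq{cidl2312}, given $v\in V_I^{\perp,l}$ the $k$-form $\eta=v\lrcorner\om$ satisfies $\io_{w_l}\cdots\io_{w_1}\eta=0$ for all $w_i\in V$; evaluating this on an additional $k-l$ vectors shows $\eta$ vanishes on every $k$-tuple, so $v\lrcorner\om=0$ and the nondegeneracy \eq{vnd} gives $v=0$. The monotonicity \eq{cidl2313} is immediate, since shrinking the set of test vectors from $W$ to $U\subseteq W$ only relaxes the defining condition, and \eq{cidl2314} then follows by applying \eq{cidl2313} to the two inclusions $U,W\subseteq U+W$.

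The heart of the proposition is \eq{cidl2315}. I would take $v\in U_I^{\perp,l_1}\cap W_I^{\perp,l_2}$ and arbitrary $x_1,\ldots,x_{l_1+l_2-1}\in U+W$; writing each $x_i=u_i+w_i$ and expanding $v\w x_1\w\cdots\w x_{l_1+l_2-1}$ multilinearly produces a sum of decomposable terms, each carrying $a$ factors drawn from $U$ and $b=l_1+l_2-1-a$ factors drawn from $W$. A pigeonhole count forces $a\geq l_1$ or $b\geq l_2$ in every term, since otherwise $a+b\leq l_1+l_2-2$. In the first case I reorder the term (at the cost of a sign) so that $v$ together with $l_1$ of the $U$-factors forms a leading block $\alpha$; because $v\in U_I^{\perp,l_1}$ this block satisfies $\alpha\lrcorner\om=0$, and the corollary of the contraction identity kills the entire term. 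The case $b\geq l_2$ is symmetric using $v\in W_I^{\perp,l_2}$. Summing over the terms gives $v\in(U+W)_I^{\perp,l_1+l_2-1}$, and the hypothesis $l_1+l_2\leq k+1$ is exactly what keeps the index $l_1+l_2-1$ inside the admissible range $\{1,\ldots,k\}$.

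Finally \eq{cidl2316} assembles from pieces already in hand: from $U\cap W\subseteq U$ and \eq{cidl2313} we get $U_I^{\perp,l_1}\subseteq(U\cap W)_I^{\perp,l_1}$, and the filtration \eq{filtration} pushes this into $(U\cap W)_I^{\perp,\ov{l}}$ since $l_1\leq\ov{l}$; the analogous chain handles $W_I^{\perp,l_2}$, and because $(U\cap W)_I^{\perp,\ov{l}}$ is a subspace it absorbs the sum. The only genuine obstacle is the bookkeeping in \eq{cidl2315}: one must check that reordering the wedge factors and tracking the attendant signs does not affect the conclusion, and that the contraction identity is invoked with the correct grouping of factors. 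Since the conclusion is in each case a vanishing statement, however, the signs are ultimately harmless, so the real content is the pigeonhole split followed by the single application of the contraction identity.
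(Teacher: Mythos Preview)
Your proof is correct and follows essentially the same approach as the paper's sketch: the first four items are handled as formal consequences of the definition, \eq{cidl2315} by the multilinear expansion plus pigeonhole argument, and \eq{cidl2316} by combining \eq{cidl2313} with the filtration \eq{filtration}. The main difference is that you make explicit the contraction identity and spell out the nondegeneracy argument for \eq{cidl2312}, whereas the paper leaves these implicit.
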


\begin{proof}[Sketch of Proof]
Equations \eq{cidl2311}, \eq{cidl2312}, \eq{cidl2313} follow immediately from the definition. Equation \eq{cidl2314} is then a straightforward consequence of Equation \eq{cidl2313}. To see Equation \eq{cidl2315}, let $v\in U_I^{\perp,l_1}\cap W_I^{\perp,l_2}$ and $u_1+w_1,\ldots,u_{l_1+l_2-1}+w_{l_1+l_2-1}\in U+W$. Then expanding
\begin{equation}
 (v\w(u_1+w_1)\w\cdots\w(u_{l_1+l_2-1}+w_{l_1+l_2-1})\lrcorner\om
\end{equation}
gives a sum of terms each of which has $p_i$ entries from $U$ and $q_i$ entries from $W$ where $p_i+q_i=l_1+l_2-1$. For a given term, if $p_i<l_1$, then $q_2\geq l_2$ so that term vanishes because $v\in W_I^{\perp,l_2}$; on the other hand, if for a given term $p_i\geq l_1$, then that term vanishes because $v\in U_I^{\perp,l_1}$ giving the relation. Finally, Equations \eq{cidl2313}, \eq{filtration} together with closure imply Equation \eq{cidl2316}.
\end{proof}

\begin{cor}[\cite{CIdL2}, \emph{Corollary $3.2$}]
\begin{equation}
 (U+W)_I^{\perp,1}=U_I^{\perp,1}\cap W_I^{\perp,1}
\end{equation}
\label{cidl232}
\end{cor}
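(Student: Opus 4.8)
The plan is to obtain the asserted equality as a pair of opposite inclusions, each of which is already essentially packaged in the preceding Proposition once it is specialized to $l=1$; there is no new computation to perform, only the observation that the two directions happen to meet exactly at the lowest level of the filtration \eq{filtration}.

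First I would dispatch the inclusion $(U+W)_I^{\perp,1}\subseteq U_I^{\perp,1}\cap W_I^{\perp,1}$ by invoking \eq{cidl2314} with $l=1$ (equivalently, by monotonicity \eq{cidl2313} applied to the inclusions $U\subseteq U+W$ and $W\subseteq U+W$). For the reverse inclusion I would apply \eq{cidl2315} with $l_1=l_2=1$: since $l_1+l_2=2\leq k+1$ holds for every multisymplectic degree $k+1\geq 2$, that relation reads $U_I^{\perp,1}\cap W_I^{\perp,1}\subseteq (U+W)_I^{\perp,\,l_1+l_2-1}=(U+W)_I^{\perp,1}$, which is exactly what is needed. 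Combining the two inclusions yields the equality. If one prefers a self-contained argument for the second step rather than citing \eq{cidl2315}, it is equally quick to verify directly: given $v\in U_I^{\perp,1}\cap W_I^{\perp,1}$ and an arbitrary element $u+w\in U+W$, linearity of the wedge product and of the interior product gives $(v\w(u+w))\lrcorner\om=(v\w u)\lrcorner\om+(v\w w)\lrcorner\om=0$, so $v\in(U+W)_I^{\perp,1}$.

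I do not expect a genuine obstacle here, since the statement is a corollary; the one point worth flagging is \emph{why} equality holds at $l=1$ while \eq{cidl2314} is only a one-sided inclusion in general. The mechanism is that the reverse-direction estimate \eq{cidl2315} returns a complement at level $l_1+l_2-1$, and only when $l_1=l_2=1$ does this collapse back to level $1$; for larger $l$ the right-hand side would be the weaker complement $(U+W)_I^{\perp,2l-1}$, so the two inclusions no longer coincide and the analogous equality fails. Thus the corollary is precisely the degenerate case in which the general subadditivity \eq{cidl2315} and monotonicity \eq{cidl2314} pinch together, and it is this pinching—rooted in the linearity exploited in the direct computation above—that makes the first orthogonal complement behave like the familiar symplectic orthogonal complement.
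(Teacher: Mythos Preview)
Your proof is correct and is exactly the argument the paper has in mind: the corollary is stated without proof immediately after the proposition, and the intended derivation is precisely to combine \eq{cidl2314} at $l=1$ with \eq{cidl2315} at $l_1=l_2=1$ to obtain the two opposite inclusions. Your additional remark explaining why equality is special to $l=1$ is a nice clarification that goes slightly beyond what the paper states.
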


These definitions together with Equations \eq{filtration}, \eq{cidl2313} imply a number of properties. If a subspace $W$ is Type-I $l$-isotropic, then it is $l'$-isotropic for all $l'\geq l$. If $W$ is $l$-coisotropic, then it is $l''$-coisotropic for all $l''\leq l$. $W$ is $l$-isotropic for all $l\geq\dim{W}$. Every subspace containing an $l$-coisotropic subspace is $l$-coisotropic. And every subspace of an $l$-isotropic subspace is $l$-isotropic.

\begin{prop}[\cite{CIdL2}, \emph{Proposition $3.4$}]
 Let $(V,\om)$ be an $n$-dimensional multisymplectic vector space of degree $k+1$. Then
 \begin{enumerate}
  \item Each subspace of dimension $1$ (resp. codimension $1$) is $1$-isotropic (resp. $k$-coisotropic)
  \item If $W$ is a $1$-isotropic subspace of $V$, then $\codim{W}\geq k$
  \item If $W$ is an $l$-isotropic subspace of $V$, then for every $l'\geq l$, there exists an $l'$-Lagrangian subspace which contains $W$.
  \item If $k+1=n$ (that is, $\om$ is a volume form for $V$) then every subspace $W$ of $V$ is $l$-Lagrangian, with $l=\dim{W}$ and, moreover, $W^{\perp,l'}=\{0\}$ for all $l'<l$.
 \end{enumerate}
\end{prop}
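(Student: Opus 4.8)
The plan is to establish the four parts separately, using only the nondegeneracy condition \eq{vnd}, the filtration \eq{filtration}, and the monotonicity \eq{cidl2313}. Part (1) splits into two quick observations. If $\dim W=1$, say $W=\sp\{w\}$, then every $w_1\in W$ is proportional to $w$, so $w\w w_1=0$ and $w\in W_I^{\perp,1}$, giving $1$-isotropy. If $\codim W=1$, I would argue contrapositively that $W_I^{\perp,k}\subseteq W$: for $v\notin W$ write $V=W\op\sp\{v\}$, expand $\om(v,x_1,\dots,x_k)$ with each $x_i$ decomposed along this splitting, and note that every term containing a second copy of $v$ dies by antisymmetry while the remaining term $\om(v,w_1,\dots,w_k)$ (all $w_i\in W$) would vanish if $v\in W_I^{\perp,k}$; this would force $v\lrcorner\om=0$, contradicting \eq{vnd}. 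Hence $v\notin W_I^{\perp,k}$ and $W$ is $k$-coisotropic.

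For part (2), the useful reformulation is that $1$-isotropy means that for each fixed $w\in W$ the $k$-form $w\lrcorner\om$ is killed by contracting any $w_1\in W$ into one slot, so by antisymmetry it vanishes whenever any of its arguments lies in $W$. A $k$-form with this property lies in $\Lambda^k\mathrm{Ann}(W)$, where the annihilator $\mathrm{Ann}(W)\subseteq V^*$ has dimension $\codim W$. Since $w\neq0$ forces $w\lrcorner\om\neq0$ by \eq{vnd}, the space $\Lambda^k\mathrm{Ann}(W)$ must be nonzero, whence $\codim W\geq k$ (the case $W=\{0\}$ being trivial as $n\geq k+1$).

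Part (3) is a maximality argument. Because $W$ is $l$-isotropic it is $l'$-isotropic for every $l'\geq l$ by \eq{filtration}, so I fix such an $l'$ and, using finite-dimensionality, choose an $l'$-isotropic $L\supseteq W$ of maximal dimension. The claim is that $L=L_I^{\perp,l'}$. If instead there were $v\in L_I^{\perp,l'}\sm L$, I would set $L'=L+\sp\{v\}$ and expand $(x\w z_1\w\cdots\w z_{l'})\lrcorner\om$ for arbitrary $x,z_1,\dots,z_{l'}\in L'$: each resulting monomial has either all factors in $L$ (vanishing because $L$ is $l'$-isotropic) or exactly one factor equal to $v$ and the rest in $L$ (vanishing because $v\in L_I^{\perp,l'}$), while monomials with two factors $v$ vanish by antisymmetry. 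Thus $L'$ would be a strictly larger $l'$-isotropic subspace, contradicting maximality; so $L$ is $l'$-Lagrangian and contains $W$.

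For part (4), when $k+1=n$ the form $\om$ is a nonzero top-degree form, and the only extra input is that contracting $\om$ against a nonzero decomposable multivector never vanishes (seen by completing the relevant vectors to a basis). Writing $d=\dim W=l$ and fixing a basis $e_1,\dots,e_d$ of $W$: any $v\in W$ satisfies $v\w e_1\w\cdots\w e_d=0$, so $W\subseteq W_I^{\perp,l}$; and for $v\notin W$ the vectors $v,e_1,\dots,e_d$ are independent, so $(v\w e_1\w\cdots\w e_d)\lrcorner\om\neq0$ and $v\notin W_I^{\perp,l}$, giving $W=W_I^{\perp,l}$. For the last assertion, any nonzero $v\in W_I^{\perp,l'}$ with $l'<l$ lies in $W$ by \eq{filtration} (as $W_I^{\perp,l'}\subseteq W_I^{\perp,l}=W$); completing $v$ to a basis $v=e_1,\dots,e_d$ of $W$ and taking $w_i=e_{i+1}$ (possible since $l'+1\leq d$) produces $e_1\w\cdots\w e_{l'+1}\neq0$, whose contraction with $\om$ is nonzero, contradicting $v\in W_I^{\perp,l'}$; hence $W_I^{\perp,l'}=\{0\}$. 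The main obstacle is part (3): unlike the direct computations in (1), (2) and (4), it is a maximality argument, and the step needing care is verifying that enlarging a maximal $l'$-isotropic $L$ by a single vector of $L_I^{\perp,l'}$ preserves $l'$-isotropy, which comes down to the bookkeeping of which monomials survive the expansion of the contraction.
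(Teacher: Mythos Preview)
Your proof is correct and follows essentially the same strategy as the paper's: the same antisymmetry/nondegeneracy contradiction for (1), the same ``$w\lrcorner\om$ must be supported on a complement of $W$'' idea for (2) (you phrase it via $\Lambda^k\mathrm{Ann}(W)$, the paper via a direct-sum complement $U$), the identical enlarge-by-one-vector maximality argument for (3), and the same coordinate contractions against a basis extension for (4). The only differences are cosmetic reformulations, and in fact your bookkeeping in (3) is slightly more complete than the paper's sketch (you explicitly dispose of the all-$L$ term and the repeated-$v$ terms).
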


\begin{proof}[Proof Adapted from \cite{CIdL2}]
 \begin{enumerate}
  \item That a $1$-dimensional subspace is $1$-isotropic follows immediately from the definition of the $1$-orthogonal complement. Now consider an $(n-1)$-dimensional subspace $W$ and let $v\in W^{\perp,k}$. If $v\not\in W$, then it is easy to show a contradiction to the nondegeneracy condition on $\om$; hence $W^{\perp,k}\subseteq W$ showing that an $(n-1)$-dimensional subspace is $k$-coisotropic.
  \item Let $W$ be a $1$-isotropic subspace of $V$, and let $U$ be any complementary subspace of $V$ such that $V=W\op U$. Then for any $w\in W$, the $k$-form $w\lrcorner\om$ vanishes when contracted with any vector of $W$; hence there must exist $k$ (linearly independent) vectors in $U$ such that $(w\w u_1\w\cdots\w u_k)\lrcorner\om\neq 0$ since otherwise we get a contradiction to the nondegeneracy condition on $\om$, and thus $\dim{U}\geq k$.
  \item From above, we know that $l$-isotropic implies $l'$-isotropic for every $l'\geq l$, so it is enough to show that, for an $l$-isotropic subspace $W$ of $V$ there exists an $l$-Lagrangian subspace which contains it; therefore, assume that $W$ is $l$-isotropic and that there is some nonzero $v\in W^{\perp,l}\setminus W$. Let $W_1=W\op span\{v\}$. We now show that $W_1$ is $l$-isotropic, so consider the $l+1$ vectors $w_i+\la_iv\in W_1$. Then expanding
  \begin{equation}
    \left((w_1+\la_1v)\w\cdots\w(w_{l+1}+\la_{l+1}v)\right)\lrcorner\om
  \end{equation}
  gives a sum of terms, each of which has $l$ factors from $W$ together with $v$, and so therefore vanishes by definition of $W^{\perp,l}$. Thus $W_1$ is an $l$-isotropic subspace of $V$, and we have the inclusions
  \begin{equation}
   W\subseteq W_1\subseteq W_1^{\perp,l}\subseteq W^{\perp,l}
  \end{equation}
  where the final inclusion follows from Equation \eq{cidl2313}. Continuing in this way, we can construct an ascending chain of $l$-isotropic subspaces which necessarily possesses a maximal element which will be $l$-Lagrangian by construction.
  \item Let $W$ be an $l$-dimensional subspace of $V$. By definition of $W^{\perp,l}$, we necessarily have $W\subseteq W^{\perp,l}$. Conversely, let $v\in W^{\perp,l}$, let $\{v_1,\ldots,v_l\}$ be a basis for $W$ and $\{v_1,\ldots,v_l,v_{l+1},\ldots,v_n\}$ its completion to a basis for $V$. Writing $v=\sum_{i=1}^n\la_iv_i$,
  \begin{equation}
  \begin{split}
   0=&(v\w v_1\w\cdots\w v_l\w v_{l+2}\w\cdots\w v_n)\lrcorner \om \\
   =&(-1)^l\la_{l+1}\underbrace{(v_1\w\cdots\w v_n)\lrcorner\om}_{\neq 0} \\
  \end{split}
  \end{equation}
  implies that $\la_{l+1}=0$. Similarly, $\la_{l+2}=\cdots=\la_{n}=0$ which shows that $v\in W$ and hence that $W$ is $l$-Lagrangian. Finally, let $v\in W^{\perp,l-1}$, and again write $v=\sum_{i=1}^n\la_iv_i$. Since, for any $i=1,\ldots,n$,
  \begin{equation}
  \begin{split}
   0=&(v\w v_1\w\cdots\w \hat{w_i}\w \cdots\w v_n)\lrcorner \om \\
   =&(-1)^{i-1}\la_{i}\underbrace{(v_1\w\cdots\w v_n)\lrcorner\om}_{\neq 0} \\
  \end{split}
  \end{equation}
  implies that $\la_i=0$, this shows that $v=0$ and hence that $W^{\perp,l-1}=\{0\}$. The complete result then follows using the filtration given by Equation \eq{filtration}.
 \end{enumerate}
\end{proof}

\begin{rem}
 In fact, one can show a little more here. If $W$ is an $l$-dimensional subspace of $V$, then $W$ is $l$-isotropic.
\end{rem}

Note that since $\{0\}$ is $l$-isotropic for any $l$, the above proposition implies that existence of $l$-Lagrangian subspaces. Further, by the proof, $l$-Lagrangian subspaces are the maximal elements of the nonempty partially ordered set of $l$-isotropic subspaces with respect to inclusion, so we also have existence in the infinite dimensional case. Unlike the symplectic case, $l$-Lagrangian subspaces need not all have the same dimension. 


\section{Type-II Orthogonality}
\label{TypeII}
In this section, we consider Type-II orthogonality defined in Equation \eq{typeiidefn}. 

\begin{rem}
 Since $G^1V=V$, $W_I^{\perp,1}=W_{II}^{\perp,1}$ where $W_I^{\perp, 1}$ is the Type-I $1^{st}$-orthogonal complement.
\end{rem}

For $v_1\w\cdots\w v_{l_1}\in W_{II}^{\perp,l_1}$. $\tilde{v}_1\w\cdots\w \tilde{v}_{l_2}\in W_{II}^{\perp,l_2}$, $v_1\w\cdots\w v_{l_1}\w\tilde{v}_1\w\cdots\w \tilde{v}_{l_2}\in W_{II}^{\perp,l_1+l_2}$. That is, we have closure under the wedge product, so that the collection of these spaces
\begin{equation}
 W_{II}^{\perp}=\bigoplus_{l=1}^nW_{II}^{\perp,l}
\end{equation}
forms a semigroup with respect to $\w$. In fact, more is true here. Let $v_1\w\cdots\w v_{l_1}\in W_{II}^{\perp,l_1}$ and let $\tilde{v}_1\w\cdots\w \tilde{v}_{l_2}\in G^{l_2}V$. Then $v_1\w\cdots\w v_{l_1}\w\tilde{v}_1\w\cdots\w\tilde{v}_{l_2}\in W_{II}^{\perp,l_1+l_2}$.

The following properties are straightforward from the definition of Type-II orthogonality.
\begin{prop}
Let $(V,\om)$ be a multisymplectic vector space of degree $k+1$, and let $U$, $W$ be any subspaces of $V$. Then for any $1\leq l\leq n$
\begin{equation}
 \{0\}_{II}^{\perp,l}=G^lV
\label{typeii0perp}
\end{equation}
\begin{equation}
\om\text{ is $r$-nondegenerate if and only if }V_{II}^{\perp,r}=\{0\}.
\label{typeiiVperp}
\end{equation}
\begin{equation}
\text{If }\om|_W\text{ is $r$-nondegenerate, then }G^rW\cap W_{II}^{\perp,r}=\{0\}
\label{typeiiWrnd}
\end{equation}
\begin{equation}
\om|_W\text{ is fully nondegenerate if and only if }G^kW\cap W_{II}^{\perp,k}=\{0\}
\label{typeiiWknd}
\end{equation}
\begin{equation}
W_{II}^{\perp,l}=G^lV\text{ for }l\geq k+1 
\end{equation}
\begin{equation}
\text{If }U\subseteq W\text{ then }W_{II}^{\perp,l}\subseteq U_{II}^{\perp,l}
\label{typeiisubseteqperp}
\end{equation}
\begin{equation}
 (U+W)_{II}^{\perp,l} = U_{II}^{\perp,l}\cap W_{II}^{\perp, l}
\label{typeiiaddperp}
\end{equation}
\end{prop}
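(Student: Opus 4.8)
The plan is to verify each of the seven identities directly from the definition of the Type-II orthogonal complement, since, as the Proposition header already signals, these are largely bookkeeping consequences of unwinding Equation \eq{typeiidefn}. I would handle them essentially in the listed order, grouping the ones that share the same mechanism. First, Equation \eq{typeii0perp} is immediate: when $W=\{0\}$ the condition ``$(v_1\w\cdots\w v_l\w w)\lrcorner\om=0$ for all $w\in W$'' is vacuous (the only $w$ is $0$, and $0\lrcorner\om=0$), so every decomposable $l$-multivector qualifies, giving $\{0\}_{II}^{\perp,l}=G^lV$. For Equation \eq{typeiiVperp}, I would simply note that $V_{II}^{\perp,r}$ consists of those $v_1\w\cdots\w v_r\in G^rV$ with $(v_1\w\cdots\w v_r\w w)\lrcorner\om=0$ for \emph{all} $w\in V$; by the alternating property this is exactly the condition $(v_1\w\cdots\w v_r)\lrcorner\om=0$ viewed appropriately, and comparing with the definition of $r$-nondegeneracy shows $V_{II}^{\perp,r}=\{0\}$ precisely when no nonzero decomposable $r$-multivector is annihilated, i.e.\ precisely when $\om$ is $r$-nondegenerate.

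The two restriction statements \eq{typeiiWrnd} and \eq{typeiiWknd} are the same argument carried out for $\om|_W$. For \eq{typeiiWrnd} I would take $v_1\w\cdots\w v_r\in G^rW\cap W_{II}^{\perp,r}$; membership in $W_{II}^{\perp,r}$ forces $(v_1\w\cdots\w v_r\w w)\lrcorner\om=0$ for all $w\in W$, and since all the $v_i$ already lie in $W$ this says exactly that the decomposable $r$-multivector is annihilated by $\om|_W$, so $r$-nondegeneracy of $\om|_W$ forces it to be zero. Equation \eq{typeiiWknd} is the $r=k$ specialization (``fully nondegenerate'' is the $r=k$ case by the definitions in the Introduction), but here I must also prove the converse direction, which runs identically in reverse: if every nonzero element of $G^kW$ survived contraction then the intersection would be trivial, and conversely a degenerate decomposable $k$-multivector in $W$ would land in the intersection. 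The statement $W_{II}^{\perp,l}=G^lV$ for $l\geq k+1$ follows because any wedge of $l\geq k+1$ vectors together with one more $w$ is an $(l+1)$-multivector with $l+1>k+1=\deg\om$, hence contracts $\om$ to zero automatically; so the defining condition holds for every decomposable $l$-multivector.

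Finally, the antitonicity \eq{typeiisubseteqperp} and the sum formula \eq{typeiiaddperp} are the order-theoretic properties. For \eq{typeiisubseteqperp}, if $U\subseteq W$ then the condition defining $W_{II}^{\perp,l}$ quantifies over a \emph{larger} set of test vectors $w$, so it is stronger and membership in $W_{II}^{\perp,l}$ implies membership in $U_{II}^{\perp,l}$. For \eq{typeiiaddperp}, one inclusion ($\subseteq$) is immediate from \eq{typeiisubseteqperp} applied to $U\subseteq U+W$ and $W\subseteq U+W$. For the reverse inclusion, a decomposable multivector in $U_{II}^{\perp,l}\cap W_{II}^{\perp,l}$ annihilates $\om$ against every $u\in U$ and against every $w\in W$; by linearity of the contraction in its last slot it then annihilates $\om$ against every $u+w\in U+W$, placing it in $(U+W)_{II}^{\perp,l}$. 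Note that unlike the Type-I case (Equation \eq{cidl2314}), here we get genuine equality precisely because Type-II complements are \emph{sets of decomposable multivectors} rather than subspaces, so there is no loss when passing between the intersection and the sum.

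I expect no serious obstacle: every identity reduces to unwinding the definition and using (multi)linearity of the interior product. \textbf{The only point requiring a little care} is the equivalence direction in \eq{typeiiVperp} and \eq{typeiiWknd} — making sure the ``if and only if'' is genuinely reversible and that I correctly match ``fully nondegenerate'' to the $r=k$ instance of $r$-nondegeneracy as defined in the Introduction. One subtlety worth flagging explicitly is that $W_{II}^{\perp,l}$ is \emph{not} a linear subspace (it is only a subset of $G^lV$ closed under scaling and under the partial wedge-multiplication described before the Proposition), so I would phrase all inclusions as inclusions of sets of decomposable multivectors and avoid treating them as vector-space operations.
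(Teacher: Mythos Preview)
Your proposal is correct and matches the paper's approach exactly: the paper simply declares that ``the following properties are straightforward from the definition of Type-II orthogonality'' and offers no further argument, and your outline is precisely the direct unwinding of Equation~\eq{typeiidefn} that this remark invites. The one substantive observation you make that the paper leaves implicit---that the ``only if'' direction of \eq{typeiiWknd} works specifically because at $r=k$ the contraction $(w_1\w\cdots\w w_k\w w)\lrcorner\om$ is a scalar, so vanishing on $W$ and vanishing on $V$ coincide---is exactly the right point of care.
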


Recall that if $\om$ is $r$-nondegenerate for some $r$, then $\om$ is $r'$-nondegenerate for all $1\leq r'\leq r$. Then Equation \eq{typeiiVperp} immediately implies that if $V^{\perp,r}=\{0\}$ for some $r$, then $V^{\perp,r'}=\{0\}$ for all $1\leq r'\leq r$; moreover, if $\om|_W$ is fully nondegenerate, then Equations \eq{typeiiWknd}, \eq{typeiiWrnd} imply $G^lW\cap W_{II}^{\perp,l}=\{0\}$ for all $1\leq l\leq k$.

\begin{dfn}
 Let $W$ be a subspace of a multisymplectic vector space $(V,\om)$. Then $W$ will be called
 \begin{enumerate}
  \item \emph{Type-II $l$-isotropic} if $G^lW\subseteq W^{\perp,l}_{II}$
  \item \emph{Type-II $l$-coisotropic} if $W^{\perp,l}_{II}\subseteq G^lW$
  \item \emph{Type-II $l$-Lagrangian} if $G^lW=W^{\perp,l}_{II}$
 \end{enumerate}
\end{dfn}

\begin{prop}
 Let $W$ be a subspace of a multisymplectic vector space $(V,\om)$.
 \begin{enumerate}
  \item If $W$ is Type-II $l$-isotropic, then $W$ is Type-II $l'$-isotropic for all $l'\geq l$.
  \item If $W$ is Type-II $l$-coisotropic, then $W$ is Type-II $l''$-coisotropic for all $l'' \leq l$.
  \item $W$ is Type-II $l$-isotropic for all $l\geq\dim{W}$.
  \item Every subspace containing a Type-II $l$-coisotropic subspace is Type-II $l$-coisotropic.
  \item Every subspace that is contained in an Type-II $l$-isotropic subspace is Type-II $l$-isotropic.
 \end{enumerate}
\end{prop}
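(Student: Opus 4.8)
The plan is to reduce all five statements to two facts recorded above: the strengthened closure property (if $v_1\w\cdots\w v_{l_1}\in W_{II}^{\perp,l_1}$ and $\ti v_1\w\cdots\w\ti v_{l_2}\in G^{l_2}V$ is \emph{arbitrary}, then their wedge lies in $W_{II}^{\perp,l_1+l_2}$) and the monotonicity relation \eq{typeiisubseteqperp}, together with one elementary fact of exterior algebra: a nonzero decomposable multivector $u_1\w\cdots\w u_m$ determines the $m$-dimensional subspace $\sp\{u_1,\ldots,u_m\}$, and it lies in $G^mU$ precisely when that subspace is contained in $U$.

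Parts (1), (4), (5) I expect to be purely formal. For (1), I would write an arbitrary element $w_1\w\cdots\w w_{l'}\in G^{l'}W$ as $(w_1\w\cdots\w w_l)\w(w_{l+1}\w\cdots\w w_{l'})$; the first factor lies in $G^lW\subseteq W_{II}^{\perp,l}$ by hypothesis and the second in $G^{l'-l}V$, so the strengthened closure property places the product in $W_{II}^{\perp,l'}$. For (5), if $U\subseteq W$ with $W$ Type-II $l$-isotropic, I would chain $G^lU\subseteq G^lW\subseteq W_{II}^{\perp,l}\subseteq U_{II}^{\perp,l}$, the final inclusion being \eq{typeiisubseteqperp}. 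For (4), if $U\subseteq W$ with $U$ Type-II $l$-coisotropic, I would chain $W_{II}^{\perp,l}\subseteq U_{II}^{\perp,l}\subseteq G^lU\subseteq G^lW$, again using \eq{typeiisubseteqperp} and the evident monotonicity of $G^l(\cdot)$ in its subspace argument. Part (3) is a dimension count: for $l\geq\dim W$ any $w_1\w\cdots\w w_l\in G^lW$ wedged with a further $w\in W$ involves $l+1>\dim W$ vectors of $W$ and hence vanishes, so $(w_1\w\cdots\w w_l\w w)\lrcorner\om=0$ for every $w\in W$ and thus $G^lW\subseteq W_{II}^{\perp,l}$.

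The one statement carrying genuine content is (2), and I expect it to be the main obstacle. Here I would assume $W_{II}^{\perp,l}\subseteq G^lW$, fix $l''\leq l$, and take a nonzero $\eta=v_1\w\cdots\w v_{l''}\in W_{II}^{\perp,l''}$ (the zero vector lies in every $G^mW$). Since $\eta\neq0$ the $v_i$ are independent, and because $l''\leq l\leq n$ I can extend them by choosing $u_1,\ldots,u_{l-l''}\in V$ with $\eta\w u_1\w\cdots\w u_{l-l''}\neq0$. The strengthened closure property then gives $\eta\w u_1\w\cdots\w u_{l-l''}\in W_{II}^{\perp,l}\subseteq G^lW$; being a nonzero decomposable $l$-vector in $G^lW$, its associated subspace $\sp\{v_1,\ldots,v_{l''},u_1,\ldots,u_{l-l''}\}$ is contained in $W$, so in particular each $v_i\in W$ and $\eta\in G^{l''}W$, whence $W_{II}^{\perp,l''}\subseteq G^{l''}W$.

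The delicate points to get right in (2) are: that the extending vectors $u_j$ may be drawn from all of $V$ rather than from $W$, which is exactly what the strengthened---as opposed to merely semigroup---closure property permits; that such an extension exists at all, which needs only $l\leq n$; and the passage from ``nonzero decomposable vector in $G^lW$'' to ``span contained in $W$'', which is the exterior-algebra fact recorded above. The degenerate ranges (e.g. $l>n$, where both $G^lW$ and $W_{II}^{\perp,l}$ collapse to $\{0\}$) are trivial and can be dispatched in a line.
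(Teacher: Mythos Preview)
Your proposal is correct and follows essentially the same approach as the paper. Parts (3), (4), (5) are identical to the paper's arguments; for (1) the paper writes out the contraction explicitly rather than citing the closure property, and for (2) the paper phrases the same argument as a contradiction (assume some $v_i\notin W$, extend, contradict $l$-coisotropy) rather than directly, but the underlying steps---the extension via the strengthened closure property and the exterior-algebra fact that a nonzero decomposable $l$-vector in $G^lW$ has its span contained in $W$---are the same.
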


\begin{proof}
\mbox{}
\begin{enumerate}
  \item Assume $l'\geq l$, and let $w_1\w\cdots\w w_{l'}\in G^{l'}W$. Then for any $w\in W$,
  \begin{equation}
   (w_1\w\cdots\w w_{l'}\w w)\lrcorner\om=(-1)^{l'-l}(w_{l+1}\w\cdots\w w_{l'})\lrcorner(w_1\w\cdots\w w_l\w w)\lrcorner\om=0
  \end{equation}
  showing that $w_1\w\cdots\w w_{l'}\in W_{II}^{\perp,l'}$ and hence that $W$ is Type-II $l'$-isotropic.
  
  \item Assume that $l''\leq l$ and let $v_1\w\cdots\w v_{l''}\in W_{II}^{\perp,l''}$. If $v_1\w\cdots\w v_{l''}\not\in G^{l''}W$, then there exists some $i$ such that $v_i\not\in W$. Without loss of generality, assume that $v_1\not\in W$. Let $v_{l''+1},\ldots,v_l\in V$ such that $v_1\w\cdots\w v_{l''}\w v_{l''+1}\w\cdots\w v_l\neq 0$. Then for any $w\in W$,
  \begin{equation}
   (v_1\w\cdots\w v_{l''}\w v_{l''+1}\w\cdots\w v_l\w w)\lrcorner\om=(-1)^{l-l''}(v_{l''+1}\w\cdots\w v_l)\lrcorner(v_1\w\cdots\w v_{l''}\w w)\lrcorner\om=0
  \end{equation}
  showing that $v_1\w\cdots\w v_{l''}\w v_{l''+1}\w\cdots\w v_l\in W_{II}^{\perp,l}$, but since $v_1\not\in W$, $v_1\w\cdots\w v_{l''}\w v_{l''+1}\w\cdots\w v_l\not\in G^lW$, a contradiction to our assumption that $W$ is Type-II $l$-coisotropic. Thus, we must have $v_1\w\cdots\w v_{l''}\in G^{l''}W$, showing that $W$ is Type-II $l''$-coisotropic.
  
  \item By the first part of this proposition, it is enough to consider the case where $l=\dim{W}$. Then for any $w_1\w\cdots\w w_l\in G^lW$ and any $w\in W$, we necessarily have $w_1\w\cdots\w w_l\w w=0$, so that $W$ is $l$-isotropic.
  
  \item Let $W$ be a subspace of $V$, and let $U$ be a subspace of $W$ such that $U$ is Type-II $l$-coisotropic. Then $U_{II}^{\perp,l}\subseteq G^lU$. Since $U\subseteq W$, we have $W_{II}^{\perp,l}\subseteq U_{II}^{\perp,l}$ by Equation \eq{typeiisubseteqperp}; also, $U\subseteq W$ implies $G^lU\subseteq G^lW$. Thus
  \begin{equation}
   W_{II}^{\perp,l}\subseteq U_{II}^{\perp,l}\subseteq G^lU\subseteq G^lW
  \end{equation}
  showing that $W$ is Type-II $l$-coisotropic.
  
  \item Let $W$ be a Type-II $l$-isotropic subspace of $V$ and let $U$ be a subspace of $W$. Then $W_{II}^{\perp,l}\subseteq U_{II}^{\perp,l}$ by Equation \eq{typeiisubseteqperp}, and $G^lU\subseteq G^lW$. Finally, $W$ is Type-II $l$-isotropic, we get the following chain
  \begin{equation}
   G^lU\subseteq G^lW\subseteq W_{II}^{\perp,l}\subseteq U_{II}^{\perp,l}
  \end{equation}
  showing that $U$ is Type-II $l$-isotropic.
 \end{enumerate}
\end{proof}

\begin{prop}
 Let $(V,\om)$ be an $n$-dimensional multisymplectic vector space of degree $k+1$. Then
 \begin{enumerate}
  \item Each subspace of dimension $l$ is Type-II $l$-isotropic.
  \item If $\om$ is $r$-nondegenerate, and if $W$ is a Type-II $r$-isotropic subspace of $V$, then $\codim{W}\geq k+1-r$
  \item If $k+1=n$ (that is, $\om$ is a volume form for $V$) then every subspace $W$ of $V$ is Type-II $l$-Lagrangian, with $l=\dim{W}$ and, moreover, $W_{II}^{\perp,l'}=\{0\}$ for all $l'<l$.
 \end{enumerate}
\end{prop}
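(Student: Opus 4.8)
The plan is to mirror the Type-I argument of Section \ref{TypeI}, replacing contractions by single vectors with contractions by decomposable multivectors and tracking where full (as opposed to weak) nondegeneracy enters. Part (1) requires nothing new: it is precisely the case $l=\dim W$ of item (3) of the preceding Proposition, which holds because any $l+1$ vectors of an $l$-dimensional space are dependent, so $w_1\w\cdots\w w_l\w w=0$ and hence $(w_1\w\cdots\w w_l\w w)\lrcorner\om=0$ for all $w_1,\ldots,w_l,w\in W$, giving $G^lW\subseteq W_{II}^{\perp,l}$.

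For part (2) I would first dispose of the degenerate case $\dim W<r$: then $G^rW=\{0\}$, the isotropy hypothesis is vacuous, and $\codim W\geq n-(r-1)\geq k+1-r$ since a nondegenerate $(k+1)$-form forces $n\geq k+1$. So assume $\dim W\geq r$ and pick linearly independent $w_1,\ldots,w_r\in W$. The object to study is $\eta=(w_1\w\cdots\w w_r)\lrcorner\om$, a form of degree $k+1-r$ that is nonzero precisely because $\om$ is $r$-nondegenerate and $w_1\w\cdots\w w_r\neq0$. Type-II $r$-isotropy gives $(w_1\w\cdots\w w_r\w w)\lrcorner\om=0$, equivalently $w\lrcorner\eta=0$, for every $w\in W$. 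The crux is then the linear-algebra fact that a nonzero $p$-form annihilated by every vector of a subspace $W$ has $\codim W\geq p$: in a basis adapted to $W$, the condition $w\lrcorner\eta=0$ for $w\in W$ forces every monomial of $\eta$ to avoid the covectors dual to a basis of $W$, so $\eta$ lies in $\La^{p}U^*$ for a complement $U$, and a nonzero element there needs $\dim U\geq p$. Taking $p=k+1-r$ yields $\codim W\geq k+1-r$.

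For part (3) the decisive simplification is that when $\om$ is a volume form the contraction map $\La^{p}V\to\La^{n-p}V^*$, $\zeta\mapsto\zeta\lrcorner\om$, is a linear isomorphism, since on a basis of decomposables it is, up to sign, the complementation bijection. Hence $\zeta\lrcorner\om=0$ iff $\zeta=0$, and the defining condition \eq{typeiidefn} collapses to the exterior-algebra condition that a decomposable $\xi\in W_{II}^{\perp,l}$ iff $\xi\w w=0$ for all $w\in W$; for a nonzero decomposable $\xi$ this says exactly $W\subseteq\sp(\xi)$. With $l=\dim W$, part (1) gives $G^lW\subseteq W_{II}^{\perp,l}$, while conversely any nonzero $\xi\in W_{II}^{\perp,l}$ (necessarily decomposable) spans an $l$-dimensional subspace containing the $l$-dimensional $W$, hence equal to $W$, so $\xi\in G^lW$; this is the Lagrangian equality. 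The same count kills $W_{II}^{\perp,l'}$ for $l'<l$, since a nonzero decomposable there would span an $l'$-dimensional space containing $W$, which is impossible; note that, unlike the Type-I case, no filtration across degrees is available, but the direct count settles every $l'<l$ at once.

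I expect the one genuinely new ingredient, and hence the main obstacle, to be the kernel-dimension lemma invoked in part (2): the remaining steps are either a direct appeal to the preceding Proposition or immediate consequences of the clean volume-form duality, whereas isolating and justifying that a nonzero $(k+1-r)$-form cannot be annihilated by a subspace of codimension less than $k+1-r$ is exactly what generalizes the weak-nondegeneracy ($r=1$) argument of the Type-I proposition to arbitrary $r$.
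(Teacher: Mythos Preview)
Your proposal is correct and follows essentially the same route as the paper: part (1) is the $l=\dim W$ instance of the preceding proposition; part (2) contracts $\om$ by a nonzero decomposable $r$-vector in $W$ and uses that the resulting nonzero $(k+1-r)$-form, being annihilated by $W$, lives in $\La^{k+1-r}$ of a complement; and part (3) reduces the defining condition to $\xi\w w=0$ via full nondegeneracy and then counts dimensions. Your treatment is slightly more careful than the paper's in two places---you dispose of the vacuous case $\dim W<r$ in (2), and you argue all $l'<l$ directly in (3) rather than reducing to $l'=l-1$ via the wedge-closure property---but these are refinements of the same argument, not a different one.
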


\begin{proof}
 \mbox{}
 \begin{enumerate}
  \item This is clear from the definition of $W_{II}^{\perp,l}$ and the fact that $w_1\w\cdots\w w_i=0$ for any $i>l$.
  
  \item Let $U$ be any subspace of $V$ such that $V=W\op U$. Since $W$ is Type-II $r$-isotropic, we have $G^rW\subseteq W_{II}^{\perp,r}$. Thus, for any nonzero $w_1\w\cdots\w w_r\in G^rW$, the $(k+1-r)$-form
  \begin{equation}
   (w_1\w\cdots\w w_r)\lrcorner\om
  \end{equation}
   vanishes when contracted with any vector $w\in W$. Hence for a fixed nonzero $w_1\w\cdots\w w_r\in G^rW$ there must exist $k+1-r$ linearly-independent vectors $u_1,\ldots,u_{k+1-r}\in U$, such that
   \begin{equation}
    (w_1\w\cdots\w w_r\w u_1\w\cdots\w u_{k+1-r})\lrcorner\om\neq 0
   \end{equation}
   Otherwise, we obtain a contradiction to the $r$-nondegeneracy of $\om$. Thus, $\dim{U}\geq k+1-r$.
   
   \item Let $(V,\om)$ be an $n$-dimensional multisymplectic vector space of degree $n$, so that $\om$ is a volume form for $V$; let $W$ be any $l$-dimensional subspace of $V$. Then by the first part of this proposition, $W$ is $l$-isotropic, so it suffices to show that $W$ is $l$-coisotropic. Let $v_1\w\cdots\w v_l\in W_{II}^{\perp,l}$ be nonzero. By definition of $W_{II}^{\perp,l}$, we have
   \begin{equation}
    (v_1\w\cdots\w v_l\w w)\lrcorner\om=0
   \end{equation}
   for all $w\in W$. Since $\om$ is a volume form, it is fully nondegenerate, and hence we must have $v_1\w\cdots\w v_l\w w=0$ for all $w\in W$. By definition of $\w$, this means that $w\in span\{v_1,\ldots,v_l\}$, that is, that $W\subseteq span\{v_1,\ldots,v_l\}$. Since $\dim{W}=l$, equality follows. Therefore $v_1\w\cdots\w v_l\in G^lW$, and hence $W$ is $l$-Lagrangian.
    
    For the final claim, note that
    \begin{equation}
     W_{II}^{\perp,l-1}=\{v_1\w\cdots\w v_{l-1}:(v_1\w\cdots\w v_{l-1}\w w)\lrcorner\om=0\text{ for all }w\in W\}
    \end{equation}
    Since $\om$ is fully nondegenerate, we must have $v_1\w\cdots\w v_{l-1}\w w=0$ for all $w\in W$. This implies that $v_1\w\cdots\w v_{l-1}=0$; otherwise, since $\dim{W}=l$, there would be a nonzero $w\in W$ such that $v_1\w\cdots v_{l-1}\w w\neq 0$. Thus, $W_{II}^{\perp,l-1}=\{0\}$. The general case follows by noting that if $W_{II}^{\perp,l'}\neq\{0\}$ for some $l'<l-1$, then we would have to have $W_{II}^{\perp,l-1}\neq\{0\}$.
 \end{enumerate}
\end{proof}

We end this section by remarking that for any subspace $W$, we always have $W_I^{\perp,1}=W_{II}^{\perp,1}$, so the notions of Type-I and Type-II $1$-isotropic subspaces are equivalent; similarly for Type-I and Type-II $1$-coisotropic as well as Type-I and Type-II $1$-Lagrangian. Thus, as appropriate, we will refer to simply \emph{$1$-isotropic}, \emph{$1$-Lagrangian} and \emph{$1$-coisotropic} subspaces.  Further, we see that a subspace $W$ that is multisymplectic in the sense that $W\cap W_I^{\perp,k}=\{0\}$ is, by definition, $1$-multisymplectic which is equivalent to $W\cap W_{II}^{\perp,1}=\{0\}$. 

\section{Proof of Theorem $1$}
\label{Theorem1}
Consider a linear subspace $W$ of the $G_2$ vector space $(V,\vp)$. Since $\vp$ has degree $3$, we have the Type-I and Type-II orthogonal complements $W_I^{\perp,1}$, $W_I^{\perp,2}$, $W_{II}^{\perp,1}$ and $W_{II}^{\perp,2}$; let $W^{\perp}$ denote the orthogonal complement of $W$ with respect to the metric $\langle\cdot,\cdot\rangle_{\vp}$. Note that from above, since we will always have $W_I^{\perp,1}=W_{II}^{\perp,1}$, we will denote these spaces simply by $W^{\perp,1}$.

\begin{prop}
For $\dim W=1$, $W$ is $1$-Lagrangian and Type-I and Type-II $2$-isotropic with respect to $\vp$.
\end{prop}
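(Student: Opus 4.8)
The statement has three claims for a $1$-dimensional subspace $W = \sp\{w\}$: that $W$ is $1$-Lagrangian (i.e. $W^{\perp,1} = W$), that it is Type-I $2$-isotropic (i.e. $W \subseteq W_I^{\perp,2}$), and that it is Type-II $2$-isotropic (i.e. $G^2W \subseteq W_{II}^{\perp,2}$). The last two are the cheapest. Type-II $2$-isotropy is immediate: since $\dim W = 1 < 2$, any wedge $w_1 \w w_2 \in G^2W$ is zero, so $G^2W = \{0\} \subseteq W_{II}^{\perp,2}$ trivially. (This is exactly the instance $l \geq \dim W$ of Proposition~\ref{...}, part (3).) Type-I $2$-isotropy is similarly forced: for $W \subseteq W_I^{\perp,2}$ I need $(w \w w_1 \w w_2)\lrcorner\vp = 0$ for all $w_1, w_2 \in W$, but every such $w_1, w_2$ is a scalar multiple of $w$, so the triple wedge vanishes and the contraction is zero. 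Equivalently, this is the general fact recorded after Corollary~\ref{...} that a $1$-dimensional subspace is $l$-isotropic for every $l \geq 1$.

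\textbf{The $1$-Lagrangian claim is the real content.} I must show $W^{\perp,1} = W$, where by definition $W^{\perp,1} = \{v \in V : (v \w w)\lrcorner\vp = 0\}$. The inclusion $W \subseteq W^{\perp,1}$ is automatic since $(w \w w)\lrcorner\vp = 0$. For the reverse inclusion I would use the vector cross product characterization $(u \w v \w z)\lrcorner\vp = \langle u \t v, z\rangle_\vp$. Fix $v \in W^{\perp,1}$; then $(v \w w \w z)\lrcorner\vp = 0$ for \emph{all} $z \in V$, which reads $\langle v \t w, z\rangle_\vp = 0$ for all $z$. Since $\langle\cdot,\cdot\rangle_\vp$ is a (nondegenerate) inner product, this forces $v \t w = 0$.

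\textbf{The key step} is then to invoke the standard property of the two-fold cross product on a $G_2$-vector space: $v \t w = 0$ if and only if $v$ and $w$ are linearly dependent. This is where the full nondegeneracy of $\vp$ (the $(3,7)$ case of Madsen--Swann) does the work — the cross product on $\Im\O \cong \R^7$ vanishes precisely on parallel pairs, just as for the cross product on $\R^3$. Granting this, $v \t w = 0$ with $w \neq 0$ gives $v \in \sp\{w\} = W$, yielding $W^{\perp,1} \subseteq W$ and hence equality. I expect the main (and only) obstacle to be citing or establishing this vanishing criterion for the cross product; in the $G_2$ setting it follows from $|u \t v|^2 = |u|^2|v|^2 - \langle u, v\rangle_\vp^2$ (the Pythagorean-type identity for vector cross products), so that $u \t v = 0$ exactly when equality holds in Cauchy--Schwarz, i.e. when $u, v$ are dependent. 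I would state this identity explicitly and reduce the $1$-Lagrangian claim to it.
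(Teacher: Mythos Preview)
Your proposal is correct and follows essentially the same route as the paper: both reduce the $1$-Lagrangian claim to the vanishing criterion $v\t w=0\iff v,w$ linearly dependent via nondegeneracy of $\langle\cdot,\cdot\rangle_\vp$, and both dispatch the two $2$-isotropy claims by the trivial observation that wedges of too many vectors from a line vanish. Your explicit mention of the identity $|u\t v|^2=|u|^2|v|^2-\langle u,v\rangle_\vp^2$ to justify the vanishing criterion is a nice touch that the paper leaves implicit.
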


\begin{proof}
Fix any nonzero vector $w\in W$. Unraveling definitions, we see that
\begin{equation}
\begin{split}
W^{\perp,1}&=\left\{v\in V:\left((v\w w_1)\lrcorner\vp\right)(\tilde{v})=0\text{ for all }w_1\in W\text{, }\tilde{v}\in V\right\}\\
&=\left\{v\in V:\langle v\t w_1,\tilde{v}\rangle_{\vp}=0\text{ for all }w_1\in W\text{, }\tilde{v}\in V\right\}\\
&=\left\{v\in V:\langle v\t \al w,\tilde{v}\rangle_{\vp}=0\text{ for all }\al\in\R\text{, }\tilde{v}\in V\right\}\\
&=\left\{v\in V:\langle v\t w,\tilde{v}\rangle_{\vp}=0\text{ for all }\tilde{v}\in V\right\}\\
\end{split}
\end{equation}
which, since $W$ is $1$-dimensional, immediately implies $W\subseteq W^{\perp,1}$. Next, assume $v\in W^{\perp,1}$ is nonzero, then by the above
\begin{equation}
\langle v\t w, \tilde{v}\rangle_{\vp}=0
\end{equation}
for all $\tilde{v}\in V$ which, by the nondegeneracy of the metric implies that $v\t w=0$ which occurs if and only if $v$, $w$ are linearly dependent, i. e., $v\in W$; therefore, all $1$-dimensional subspaces of $(V,\vp)$ are $1$-Lagrangian. 

That $W$ is Type-I $2$-isotropic follows directly from the fact that $2>\dim W$ implies $W_I^{\perp,2}=V$. Further, since $G^2W=\{0\}$, we immediately have that $G^2W\subseteq W_{II}^{\perp,2}$. To see that $W$ is not Type-II $2$-Lagrangian, note
\begin{equation}
\begin{split}
 W_{II}^{\perp,2}&=\{v_1\w v_2\in G^2V:(v_1\w v_2\w w_1)\lrcorner\vp=0\text{ for all }w_1\in W\} \\
 &=\{v_1\w v_2:\al(v_1\w v_2\w w)\lrcorner\vp=0\text{ for all }\al\in\R\} \\
 &=\{v_1\w v_2:(v_1\w v_2\w w)\lrcorner\vp=0\}\\
 &=\{v_1\w v_2:\langle v_1\t v_2, w\rangle_{\vp}=0\}\\
\end{split}
\end{equation}
Hence for any nonzero $v\in V-W$, $v\w w\in W_{II}^{\perp,2}$ showing that $W_{II}^{\perp,2}\neq\{0\}$ and thus that $W$ is specifically Type-II $2$-isotropic.
\end{proof}

\begin{prop}
For $\dim W\geq2$, $W$ is $1$-coisotropic with respect to $\vp$.
\end{prop}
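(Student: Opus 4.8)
The plan is to compute $W^{\perp,1}$ explicitly and show it is in fact trivial, from which $1$-coisotropy is immediate. First I would unravel the definition exactly as in the preceding proposition: for $v\in V$, the condition $(v\w w)\lrcorner\vp=0$ for all $w\in W$ is equivalent, via the identity $(v\w w\w\ti v)\lrcorner\vp=\langle v\t w,\ti v\rangle_\vp$ together with the nondegeneracy of $\langle\cdot,\cdot\rangle_\vp$, to $v\t w=0$ for all $w\in W$. Hence
\[
W^{\perp,1}=\{v\in V:v\t w=0\text{ for all }w\in W\}.
\]

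Next I would invoke the characterization of the vanishing of the cross product already exploited in the $\dim W=1$ case: on a $G_2$-vector space $v\t w=0$ if and only if $v$ and $w$ are linearly dependent. (For a self-contained justification one can use $|v\t w|^2=|v|^2|w|^2-\langle v,w\rangle_\vp^2$, so that $v\t w=0$ forces equality in Cauchy--Schwarz.) Consequently every nonzero $v\in W^{\perp,1}$ must be parallel to every vector of $W$.

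Now suppose $\dim W\ge 2$ and choose linearly independent $w_1,w_2\in W$. If some nonzero $v\in W^{\perp,1}$ existed, it would be parallel to both $w_1$ and $w_2$, forcing $w_1$ and $w_2$ to be parallel, a contradiction. Therefore $W^{\perp,1}=\{0\}\subseteq W$, so $W$ is $1$-coisotropic. The argument is essentially a corollary of the $1$-dimensional computation; the only step carrying real content is the equivalence $v\t w=0\iff v,w$ linearly dependent, and since this is precisely the fact that drove the previous proposition, I anticipate no genuine obstacle here.
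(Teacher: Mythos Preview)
Your proof is correct and follows essentially the same approach as the paper: both arguments reduce membership in $W^{\perp,1}$ to the condition $v\t w=0$ for all $w\in W$, invoke the fact that $v\t w=0$ forces linear dependence, and conclude that $W^{\perp,1}=\{0\}$ once $\dim W\ge 2$. Your presentation is arguably cleaner, and your inclusion of the identity $|v\t w|^2=|v|^2|w|^2-\langle v,w\rangle_\vp^2$ makes the linear-dependence step self-contained.
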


\begin{proof}
Let $\{e_1,e_2\}$ be any basis for $W$. Then, since $e_1\t e_2\neq 0$, it follows that $e_1,e_2\not\in W^{\perp,1}$ by nondegeneracy of the metric $\langle\cdot,\cdot\rangle_{\vp}$, so $W\cap W^{\perp,1}=\{0\}$ showing that $W^{\perp,1}\subseteq W^{\perp}$. Conversely, let $v\in W^{\perp}$, and fix a nonzero $w\in W$. Then, if $0=\langle v\t w, \tilde{v}\rangle_{\vp}$ for all $\tilde{v}\in V$, we must have $v\t w=0$ which means that $v$ and $w$ are linearly dependent, showing that $v\in W$. Since $W\cap W^{\perp}=\{0\}$, this shows that $v=0$, proving that $W^{\perp,1}=\{0\}$ when $\dim W=2$. A similar argument then shows that $W^{\perp,1}=\{0\}$ for any subspace $W$ with $\dim W\geq 2$; this implies that any subspace $W$ with $\dim W\geq 2$ is $1$-coisotropic.
\end{proof}

\begin{prop}
 For $\dim W=2$, $W$ is Type-I and Type-II $2$-isotropic with respect to $\vp$.
\end{prop}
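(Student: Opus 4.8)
The plan is to prove the two containments $W\subseteq W_I^{\perp,2}$ and $G^2W\subseteq W_{II}^{\perp,2}$, observing that both are immediate consequences of $\dim W=2$. In fact these are precisely the cases $l=\dim W$ of the general results already established: the Remark in Section~\ref{TypeI} gives that an $l$-dimensional subspace is Type-I $l$-isotropic, while part~(1) of the final Proposition of Section~\ref{TypeII} gives that an $l$-dimensional subspace is Type-II $l$-isotropic. Taking $l=2$ yields both halves of the claim at once, so the cleanest route is simply to invoke these two results.

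For completeness I would also spell out the one-line dimension count underlying both. For Type-I $2$-isotropy, fix $v\in W$ and arbitrary $w_1,w_2\in W$; since $v,w_1,w_2$ all lie in the $2$-dimensional space $W$, they are linearly dependent, so $v\w w_1\w w_2=0$ and hence $(v\w w_1\w w_2)\lrcorner\vp=0$. As this holds for every $w_1,w_2\in W$, we conclude $v\in W_I^{\perp,2}$, and therefore $W\subseteq W_I^{\perp,2}$. For Type-II $2$-isotropy, take any decomposable $w_1\w w_2\in G^2W$ and any $w\in W$; again $w_1,w_2,w$ all lie in the $2$-dimensional $W$, so $w_1\w w_2\w w=0$ and $(w_1\w w_2\w w)\lrcorner\vp=0$, whence $w_1\w w_2\in W_{II}^{\perp,2}$ and thus $G^2W\subseteq W_{II}^{\perp,2}$.

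There is essentially no obstacle here: the entire content is the observation that any wedge of three vectors drawn from a $2$-dimensional space vanishes, so neither the cross product $\t$ nor the metric $\langle\cdot,\cdot\rangle_{\vp}$ of the $G_2$-structure is needed. Unlike the $\dim W=1$ case, I would not expect to sharpen ``isotropic'' to ``Lagrangian'': because $\vp$ is $2$-nondegenerate on the $7$-dimensional space $V$, the complements $W_I^{\perp,2}$ and $W_{II}^{\perp,2}$ are genuinely larger than $W$ and $G^2W$, but verifying this strictness is not required for the stated claim and I would leave it aside.
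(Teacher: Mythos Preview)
Your argument for the inclusions $W\subseteq W_I^{\perp,2}$ and $G^2W\subseteq W_{II}^{\perp,2}$ is correct and is indeed more elementary than the paper's. The paper instead rewrites both orthogonal complements via the cross product and metric, obtaining
\[
W_I^{\perp,2}=\{v\in V:\langle e_1\t e_2,v\rangle_{\vp}=0\},\qquad
W_{II}^{\perp,2}=\{v_1\w v_2:\langle v_1\t v_2,w\rangle_{\vp}=0\text{ for all }w\in W\},
\]
and then uses the orthogonality $\langle e_i,e_1\t e_2\rangle_{\vp}=0$ to get the inclusions. Your pure dimension count (any three vectors in a $2$-plane wedge to zero) bypasses all of this and, as you note, is just the specialisation of the general $l=\dim W$ isotropy results already proved.

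What the paper's route buys, however, is the \emph{strictness} of the inclusions, and here you have left a gap relative to what the paper actually proves. From the cross-product description the paper reads off $\dim W_I^{\perp,2}=\dim\bigl((\sp\{e_1\t e_2\})^{\perp}\bigr)=6\neq 2$, so $W\subsetneq W_I^{\perp,2}$; and for Type~II it exhibits an explicit element of $W_{II}^{\perp,2}\setminus G^2W$ in $(\R^7,\vp_0)$. This strictness is part of the content of the proposition as it sits inside Theorem~1: throughout that classification the paper distinguishes ``$l$-isotropic'' from ``$l$-Lagrangian'' (compare the $\dim W=1$ and $\dim W=4$ cases), so ``Type-I and Type-II $2$-isotropic'' here is meant to record that $W$ is isotropic and \emph{not} Lagrangian. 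Your final paragraph correctly anticipates that the complements are genuinely larger, but you should not leave this aside; the $G_2$-structure (via the cross product or an explicit model computation) is exactly what is needed to confirm it.
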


\begin{proof}
Let $\{e_1,e_2\}$ be any basis for $W$. By definition of $W_I^{\perp, 2}$, we have
\begin{equation}
\begin{split}
W_I^{\perp, 2}&=\left\{v\in V:(v\w w_1\w w_2)\lrcorner\vp=0\text{ for all }w_1,w_2\in W\right\}\\
&=\left\{v\in V:(w_1\w w_2\w v)\lrcorner\vp=0\text{ for all }w_1,w_2\in W\right\}\\
&=\left\{v\in V:\langle w_1\t w_2, v\rangle_{\vp}=0\text{ for all }w_1,w_2\in W\right\}\\
\end{split}
\end{equation}
In the basis $\{e_1,e_2\}$ for $W$ from above, we have $w_1=\al_{11}e_1+\al_{12}e_2$ and $w_2=\al_{21}e_1+\al_{22}e_2$ for some $\al_{ij}\in\R$, $i,j=1,2$. Then we calculate that
\begin{equation}
\begin{split}
w_1\t& w_2=(\al_{11}e_1+\al_{12}e_2)\t(\al_{21}e_1+\al_{22}e_2)\\
=&\al_{11}\al_{21}\underbrace{(e_1\t e_1)}_{=0}+\al_{11}\al_{22}(e_1\t e_2)+\al_{12}\al_{21}\underbrace{(e_2\t e_1)}_{=-(e_1\t e_2)} +\al_{12}\al_{22}\underbrace{(e_2\t e_2)}_{=0}\\
=&(\al_{11}\al_{22}-\al_{12}\al_{21})(e_1\t e_2)
\end{split}
\end{equation}
Now, since we may reasonably assume that $w_1$, $w_2$ are linearly independent, it follows that $\al_{11}\al_{22}-\al_{12}\al_{21}\neq 0$. Thus, we can refine the above to
\begin{equation}
\begin{split}
W_I^{\perp, 2}&=\left\{v\in V:\langle w_1\t w_2, v\rangle_{\vp}=0\text{ for all }w_1,w_2\in W\right\}\\
&=\left\{v\in V:\langle (\al_{11}\al_{22}-\al_{12}\al_{21})(e_1\t e_2), v\rangle_{\vp}=0\text{ for all }\al_{11}\al_{22}-\al_{12}\al_{21}\neq 0\right\}\\
&=\left\{v\in V:(\al_{11}\al_{22}-\al_{12}\al_{21})\langle (e_1\t e_2), v\rangle_{\vp}=0\text{ for all }\al_{11}\al_{22}-\al_{12}\al_{21}\neq 0\right\}\\
&=\left\{v\in V:\langle e_1\t e_2, v\rangle_{\vp}=0\right\}\\
\end{split}
\end{equation}
It is then immediate from $0=\langle e_i, e_1\t e_2\rangle_{\vp}$, $i=1,2$, that $W\subseteq W_I^{\perp, 2}$. Further, it is clear from the fact that 
\begin{equation}
\dim W_I^{\perp,2}=\dim\left(\left(span\{e_1\t e_2\}\right)^{\perp}\right)=6
\end{equation}
that $W\neq W_I^{\perp, 2}$. Thus, for $\dim W=2$, $W$ is Type-I $2$-isotropic.

Next, consider
\begin{equation}
\begin{split}
 W_{II}^{\perp, 2}&=\{v_1\w v_2:(v_1\w v_2\w w)\lrcorner\vp=0\text{ for all }w\in W\}\\
 &=\{v_1\w v_2:\langle v_1\t v_2, w\rangle_{\vp}=0\text{ for all }w\in W\}
\end{split}
\end{equation}
Let $w_1\w w_2\in G^2W$ be nonzero. Since $\langle w_1\t w_2, w_i\rangle_{\vp}=0$ for $i=1,2$, we see that $w_1\w w_2\in W_{II}^{\perp,2}$ and hence $G^2W\subseteq W_{II}^{\perp,2}$. 

To see that the two are not equal in general, consider $(\R^7,\vp_0)$, and let 
\begin{equation}
 W=span\left\{\frac{\pd}{\pd x^1},\frac{\pd}{\pd x^2}\right\}
\end{equation}
In this case, a quick calculation shows that we have 
\begin{equation}
 \frac{\pd}{\pd x^1}\w\frac{\pd}{\pd x^4}\in W_{II}^{\perp,2}
\end{equation}
Thus, a $2$-dimensional subspace $W$ is Type-II $2$-isotropic.
\end{proof}

\begin{prop}
For $\dim W=3$,
\begin{itemize}
    \item if $(W,\t)$ is isomorphic to $(\Im\mathbb{H},\t_0)$, $W$ is $2$-multisymplectic.
    \item otherwise, $W$ is Type-I and Type-II $2$-isotropic.
\end{itemize}
\end{prop}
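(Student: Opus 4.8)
The plan is to reduce both bullets to a single scalar invariant of the $3$-plane. Fix a $g_\vp$-orthonormal basis $\{e_1,e_2,e_3\}$ of $W$ and set $\la:=(e_1\w e_2\w e_3)\lrcorner\vp=\langle e_1\t e_2,e_3\rangle_\vp$. First I would identify the hypothesis ``$(W,\t)$ is isomorphic to $(\Im\mathbb{H},\t_0)$'' with the condition $\la\neq0$ by computing the multiplication that $\t$ induces on $W$. Since $e_i\t e_j$ is orthogonal to both $e_i$ and $e_j$, the projected cross product $\t_W:=\pi_W\ci\t$ on $W$ is completely determined by $\la$; explicitly
\begin{equation}
\pi_W(e_1\t e_2)=\la\, e_3,\qquad \pi_W(e_2\t e_3)=\la\, e_1,\qquad \pi_W(e_3\t e_1)=\la\, e_2,
\end{equation}
where the \emph{same} scalar $\la$ appears in all three identities because $\vp$ is alternating and cyclic permutations are even. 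Thus $\t_W$ is $\la$ times the standard quaternionic cross product in the basis $\{e_1,e_2,e_3\}$: after rescaling the basis it is isomorphic to $(\Im\mathbb{H},\t_0)$ exactly when $\la\neq0$, and it vanishes identically exactly when $\la=0$. This is the step that splits the two cases and, crucially, shows there is no intermediate possibility.

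Next I would dispose of the case $\la\neq0$. Here $\vp|_W=\la\,\Vol_W$ is a nonzero $3$-form on the $3$-dimensional space $W$, hence a volume form and in particular fully (that is, $2$-) nondegenerate; by definition this means $W$ is $2$-multisymplectic, and via Equation \eq{typeiiWknd} with $k=2$ it is equivalently recorded as $G^2W\cap W_{II}^{\perp,2}=\{0\}$. One may also read off $\ker(\vp|_W)=W\cap W_I^{\perp,2}=\{0\}$ directly from the volume-form property.

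In the complementary case $\la=0$ the restriction $\vp|_W$ vanishes identically, and both isotropy statements are then immediate. For Type-I, every $w,w_1,w_2\in W$ gives $(w\w w_1\w w_2)\lrcorner\vp=0$, so $W\subseteq W_I^{\perp,2}$; and $W\neq W_I^{\perp,2}$ since $W_I^{\perp,2}=\sp\{e_1\t e_2,e_2\t e_3,e_3\t e_1\}^{\perp}$ has dimension at least $4$. For Type-II, every decomposable $w_1\w w_2\in G^2W$ satisfies $\langle w_1\t w_2,w\rangle_\vp=0$ for all $w\in W$, so $G^2W\subseteq W_{II}^{\perp,2}$, the inclusion being proper by exhibiting a decomposable bivector $e_1\w u$ with $u\notin W$ and $e_1\t u\in W^{\perp}$, lying in $W_{II}^{\perp,2}\sm G^2W$ exactly as in the $\dim W=2$ case. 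Hence $W$ is specifically Type-I and Type-II $2$-isotropic.

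The main obstacle is the first step: correctly matching the algebraic hypothesis ``$(W,\t)\cong(\Im\mathbb{H},\t_0)$'' to the computable invariant $\la$, and in particular proving that the induced multiplication on $W$ is governed by the single scalar $\la$, so that the only alternatives are ``isomorphic to $\Im\mathbb{H}$'' and ``identically zero.'' Once this rigidity is established the two remaining cases are short: one is the volume-form/full-nondegeneracy computation, the other is the vanishing of $\vp|_W$.
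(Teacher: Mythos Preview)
Your route through the single scalar $\la=\vp(e_1,e_2,e_3)$ is genuinely different from the paper's. The paper splits on whether some $e_i\t e_j$ lies in $W$: if so, it uses the identity $x\t(x\t y)=-|x|^2y+\langle x,y\rangle_\vp x$ to force $e_1\t e_2=\pm e_3$ and full closure of $W$ under $\t$, then reads off $W_I^{\perp,2}=W^\perp$ and $G^2W\cap W_{II}^{\perp,2}=\{0\}$; if no $e_i\t e_j\in W$, it builds the orthonormal $7$-frame $\{e_1,e_2,e_3,e_1\t e_2,e_1\t e_3,e_2\t e_3,\tilde e\}$ and checks isotropy and strictness explicitly. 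Your repackaging as $\vp|_W=\la\,\Vol_W$ dispatches the two conclusions (nonzero volume form $\Rightarrow$ $2$-multisymplectic; zero form $\Rightarrow$ both $2$-isotropies) more directly, and your strictness arguments are fine.

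There is, however, a real gap at the identification step. In the paper, ``$(W,\t)\cong(\Im\mathbb{H},\t_0)$'' refers to the restriction of the ambient cross product $\t$ to $W$---hence to $W$ being \emph{closed} under $\t$---not to the projected product $\t_W=\pi_W\ci\t$ that you analyze. Closure under $\t$ forces $|\la|=1$, whereas your dichotomy is $\la\neq0$ versus $\la=0$; these do not coincide. In $(\R^7,\vp_0)$ take $e_1=\pd_1$, $e_2=\pd_2$, $e_3=\cos\th\,\pd_3+\sin\th\,\pd_4$ with $0<\th<\pi/2$: then $\la=\cos\th\in(0,1)$ while $e_1\t e_2=\pd_3\notin W$, so $W$ is not closed under $\t$, yet $\vp|_W\neq0$ and $W$ is $2$-multisymplectic, not $2$-isotropic. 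Thus your claimed equivalence ``$\la\neq0\Leftrightarrow(W,\t)\cong(\Im\mathbb{H},\t_0)$'' fails under the paper's reading of the hypothesis. (The paper's own ``otherwise'' case silently assumes $\langle e_i\t e_j,e_k\rangle_\vp=0$, i.e.\ $\la=0$, when it declares its six-vector set orthonormal, so it too misses the intermediate range $0<|\la|<1$; but that defect does not license your identification.) Your invariant $\la$ is exactly right for deciding $2$-multisymplectic versus $2$-isotropic; what is missing is an argument tying the stated hypothesis on $(W,\t)$ to $\la\neq0$, and the example shows no such argument exists without reinterpreting $(W,\t)$.
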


\begin{proof}
Let $\{e_1,e_2,e_3\}$ be any orthonormal basis for $W$ with respect to $\langle\cdot,\cdot\rangle_{\vp}$, and assume that $e_i\t e_j\in W$ for some $1\leq i<j\leq 3$; without loss of generality, assume that $e_1\t e_2\in W$. Then writing $e_1\t e_2=\al e_1+\be e_2+\ga e_3$ and using the orthogonality property of $\t$, we see that
\begin{equation}
0=\langle e_1\t e_2, e_1\rangle_{\vp}=\langle(\al e_1+\be e_2+\ga e_3),e_1\rangle_{\vp}=\al
\end{equation}
and
\begin{equation}
0=\langle e_1\t e_2, e_2\rangle_{\vp}=\langle(\al e_1+\be e_2+\ga e_3),e_2\rangle_{\vp}=\be.
\end{equation}
Thus, $e_1\t e_2=\ga e_3$. From here, note that
\begin{equation}
\begin{split}
\ga^2&=\langle\ga e_3,\ga e_3\rangle_{\vp}=\langle e_1\t e_2, e_1\t e_2\rangle_{\vp}=\vp(e_1,e_2,e_1\t e_2)\\
&=-\vp(e_1,e_1\t e_2,e_2)=-\langle e_1\t(e_1\t e_2), e_2\rangle_{\vp}
\end{split}
\end{equation}
Using the identity $x\t(x\t y)=-\left|x\right|^2y+\langle x,y\rangle_{\vp} x$, we have that
\begin{equation}
\ga^2=-\langle e_1\t(e_1\t e_2), e_2\rangle_{\vp}=-\langle -e_2,e_2\rangle_{\vp}=1
\end{equation}
showing that 
\begin{equation}
 e_1\t e_2=\pm e_3
 \label{e1e2e3}
\end{equation}
Using the same identity, we have that
\begin{equation}
e_1\t e_3=\pm e_1\t(e_1\t e_2)=\pm (-e_2)=\pm e_2
\label{e1e3e2}
\end{equation}
and
\begin{equation}
e_2\t e_3=\pm e_2\t(e_1\t e_2)=\pm e_2\t(e_2\t e_1)=\pm e_1.
\label{e2e3e1}
\end{equation}
This shows that, in this case, $w_1\t w_2\in W$ for any $w_1,w_2\in W$. 

For Type-I, recall that
\begin{equation}
W_{I}^{\perp, 2}=\left\{v\in V:\langle w_1\t w_2, v\rangle_{\vp}=0\text{ for all }w_1,w_2\in W\right\}
\end{equation}
showing that, in this case, $W_I^{\perp, 2}=W^{\perp}$ and hence $W\cap W_I^{\perp,2}=\{0\}$; therefore, in the case that $(W,\t)\cong(\Im\mathbb{H},\t_0)$, we get that $W$ is $1$-multisymplectic using Type-I orthogonality. 

For Type-II, recall that
\begin{equation}
\begin{split}
W_{II}^{\perp,2}&=\{v_1\w v_2\in G^2V:(v_1\w v_2\w w)\lrcorner\vp=0\text{ for all }w\in W\}\\
&=\{v_1\w v_2:\langle v_1\t v_2, w\rangle_{\vp}=0\text{ for all }w\in W\}\\
\end{split}
\end{equation}
By the above, we have that $w_1\t w_2\in W$ for all $w_1,w_2\in W$; therefore, $w_1\w w_2\not\in W_{II}^{\perp,2}$ since $\langle w_1\t w_2, w_1\t w_2\rangle_{\vp}\neq 0$. Hence $G^2W\cap W_{II}^{\perp,2}=\{0\}$, and thus, in the case that $(W,\t)\cong(\Im\mathbb{H},\t_0)$, $W$ is $2$-multisymplectic by Type-II orthogonality. 

Next, let $\{e_1,e_2,e_3\}$ again be an orthonormal basis for $W$, but now assume that $e_i\t e_j\not\in W$ for all $1\leq i, j\leq 3$, $i\neq j$. Then $$\{e_1,e_2,e_3,e_1\t e_2, e_1\t e_3, e_2\t e_3\}$$ is necessarily a linearly independent orthonormal set. Extend this to an orthonormal basis 
\begin{equation}
 \{e_1,e_2,e_3,e_1\t e_2, e_1\t e_3, e_2\t e_3,\tilde{e}\}
\end{equation}
for $V$. We remark that since the identity $x\t(x\t y)=-\left|x\right|^2y+\langle x,y\rangle_{\vp} x$ holds for any $x,y\in V$, Equations \eq{e1e2e3}, \eq{e1e3e2} and \eq{e2e3e1} above must still hold, in which case we can take $\tilde{e}=e_1\t(e_2\t e_3)$.

Note that since $e_i\t e_j\not\in W$, we necessarily have, 
\begin{equation}
 \langle e_i\t e_j, e_k\rangle_{\vp}=0\text{ for all }i,j,k=1,2,3.
\end{equation}
showing both that $W\subseteq W_{I}^{\perp, 2}$ and $G^2W\subseteq W_{II}^{\perp, 2}$. By construction, however, we also have $\tilde{e}\in W_I^{\perp,2}$ and $e_1\w\tilde{e}\in W_{II}^{\perp,2}$ showing that $W$ is Type-I $2$-isotropic and Type-II $2$-isotropic respectively.
\end{proof}

\begin{prop}
For $\dim W=4$,
\begin{itemize}
    \item if $W$ contains a $3$-dimensional subspace $(\tilde{W},\t)$ isomorphic to $(\Im\mathbb{H},\t_0)$, $W$ is Type-I $2$-coisotropic but is none of Type-II $2$-isotropic, Type-II $2$-coisotropic nor $2$-multisymplectic.
    \item otherwise, $W$ is Type-I $2$-Lagrangian and Type-II $2$-isotropic.
\end{itemize}
\end{prop}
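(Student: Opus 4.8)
The plan is to reduce both assertions to the single subspace $C(W):=\operatorname{span}\{w_1\t w_2 : w_1,w_2\in W\}$. Exactly as in the $\dim W=2$ computation, $W_I^{\perp,2}=\{v\in V:\langle w_1\t w_2,v\rangle_\vp=0\ \text{for all }w_1,w_2\in W\}=C(W)^{\perp}$, while $W_{II}^{\perp,2}=\{v_1\w v_2:v_1\t v_2\in W^{\perp}\}$. Hence $W$ is Type-I $2$-Lagrangian iff $C(W)=W^{\perp}$, and $W$ is Type-II $2$-isotropic iff $w_1\t w_2\in W^{\perp}$ for all $w_1,w_2\in W$, i.e. iff $\langle w_1\t w_2,w_3\rangle_\vp=\vp(w_1,w_2,w_3)$ vanishes on $W$, i.e. iff $W$ is coassociative. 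So the invariant governing the two bullets is whether $\vp|_W=0$.

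For the first bullet I would take an orthonormal basis $\{e_1,e_2,e_3\}$ of the associative $\ti W$ (closed under $\t$ by the $\dim W=3$ proposition) and a unit $e_4\in W\cap\ti W^{\perp}$. The key sub-step is that $\{e_1\t e_4,e_2\t e_4,e_3\t e_4\}$ is an orthonormal subset of $W^{\perp}$: orthogonality to $W$ follows from $\langle e_i\t e_4,e_j\rangle_\vp=-\langle e_4,e_i\t e_j\rangle_\vp=0$ (as $e_i\t e_j\in\ti W\perp e_4$) and $\langle e_i\t e_4,e_4\rangle_\vp=0$, and orthonormality from $\langle a\t b,c\t b\rangle_\vp=\langle a,c\rangle_\vp|b|^2-\langle a,b\rangle_\vp\langle c,b\rangle_\vp$ (whose would-be $\star\vp$-correction drops out for the repeated argument $b=e_4$). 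Since the products inside $\ti W$ span $\ti W$, this gives $C(W)=\ti W\op W^{\perp}=(\R e_4)^{\perp}$, so $\dim C(W)=6$ and $W_I^{\perp,2}=\R e_4\subsetneq W$: Type-I $2$-coisotropic but not Lagrangian. As $G_2$ is transitive on pairs (associative $3$-plane, orthogonal line) I may pass to $W=\operatorname{span}\{e_1,\dots,e_4\}$ and read off the Type-II witnesses $e_1\w e_2$ ($e_1\t e_2=e_3\notin W^{\perp}$, so isotropy fails), $e_1\w e_4$ ($e_1\t e_4=e_5\in W^{\perp}$, so $G^2W\cap W_{II}^{\perp,2}\neq\{0\}$ and $W$ is not $2$-multisymplectic), and $e_1\w e_6$ ($e_6\notin W$ but $e_1\t e_6=e_7\in W^{\perp}$, so coisotropy fails).

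For the second bullet I would use coassociativity directly: $\vp|_W=0$ gives $C(W)\subseteq W^{\perp}$ at once, which is Type-II $2$-isotropy. For the Type-I statement I must upgrade this to $C(W)=W^{\perp}$; reducing to the standard coassociative plane $\operatorname{span}\{e_1,e_2,e_4,e_7\}$ (using transitivity of $G_2$ on coassociative $4$-planes) and listing the six products $e_i\t e_j$ exhibits a basis of $W^{\perp}=\operatorname{span}\{e_3,e_5,e_6\}$, so $C(W)=W^{\perp}$ and $W$ is Type-I $2$-Lagrangian.

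The step I expect to be the real obstacle is the exhaustiveness of the two cases — the tacit claim that a $4$-plane not containing an associative $3$-plane must be coassociative. Writing $\xi=\star_W(\vp|_W)\in W$, the associative bound $\vp(f_1,f_2,f_3)=\langle f_1\t f_2,f_3\rangle_\vp\le|f_1\t f_2|=1$ on orthonormal triples gives $|\xi|=\max_{\Pi\subset W}|\vp(\Pi)|\in[0,1]$ (here $\vp(\Pi)$ is the value of $\vp$ on an oriented orthonormal frame of $\Pi$), with $|\xi|=1$ exactly when $W$ contains an associative $3$-plane and $|\xi|=0$ exactly when $W$ is coassociative. However, intermediate values do occur: for $W_t=\operatorname{span}\{e_1,e_2,e_4,\cos t\,e_7+\sin t\,e_3\}$ one finds $\vp_0|_{W_t}=\sin t\,\big(e_1^{*}\w e_2^{*}\w(\cos t\,e_7+\sin t\,e_3)^{*}\big)$, so $|\xi|=\sin t\in(0,1)$ for $0<t<\pi/2$; these $W_t$ contain no associative $3$-plane yet are not coassociative, and a direct check gives $\dim C(W_t)=6$, so they behave like the first bullet. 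Thus the genuinely clean statement is the dichotomy coassociative / non-coassociative, and the hypothesis ``$W$ contains an associative $3$-plane'' is best read as ``$\vp|_W\neq0$''; with that reading the reduction above proves both bullets.
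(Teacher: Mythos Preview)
Your diagnosis in the final paragraph is correct, and it is the heart of the matter: the two cases in the proposition do not exhaust all $4$-planes, and the second bullet is false as stated. The paper's own proof commits precisely the error you anticipate. From the hypothesis that $W$ contains no associative $3$-plane it argues, for orthonormal $e_1,e_2\in W$, that ``we must have $e_1\t e_2\in W^{\perp}$ since otherwise $\{e_1,e_2,e_1\t e_2\}$ would be a $3$-dimensional subspace of $W$ isomorphic to $\Im\mathbb{H}$''. But $e_1\t e_2\notin W^{\perp}$ only says that $e_1\t e_2$ has a nonzero $W$-component, not that it lies in $W$. Your family $W_t$ for $0<t<\pi/2$ is a genuine counterexample to the second bullet: it contains no associative $3$-plane, yet $\vp|_{W_t}\neq 0$, so $W_t$ is not Type-II $2$-isotropic, and your computation $\dim C(W_t)=6$ gives $(W_t)_I^{\perp,2}=\R e_4\subsetneq W_t$, so $W_t$ is strictly Type-I $2$-coisotropic rather than $2$-Lagrangian. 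The correct dividing line is coassociative versus non-coassociative, exactly as you say.

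Where your argument and the paper's overlap --- the first bullet under the existence of an associative $3$-plane, and the second bullet under the (tacitly strengthened) hypothesis that $W$ is coassociative --- the two proofs run the same way, your packaging via $C(W)$ and $W_I^{\perp,2}=C(W)^{\perp}$ being a tidier organising device and your appeals to $G_2$-transitivity replacing some of the paper's ad hoc cross-product identities. Your closing sentence over-reaches slightly, though: ``the reduction above'' for the first bullet was carried out only assuming an associative $3$-plane, so it does not yet cover the intermediate $4$-planes in general. The missing step is short. For \emph{any} $4$-plane $W$ and nonzero $v\in W_I^{\perp,2}$ one has $v\t w\in W^{\perp}$ for all $w\in W$, so $w\mapsto v\t w$ sends the $4$-dimensional $W$ into the $3$-dimensional $W^{\perp}$; since its kernel is $\R v$, this forces $v\in W$. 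Thus every $4$-plane is Type-I $2$-coisotropic, and is $2$-Lagrangian precisely when $\vp|_W=0$. For the remaining Type-II claims in the non-coassociative case, $\vp|_W\neq 0$ already rules out isotropy; your vector $\xi$ gives $\xi\w w\in G^2W\cap W_{II}^{\perp,2}$ for any $w\in W$ independent of $\xi$, ruling out $2$-multisymplecticity; and failure of Type-II coisotropy follows from your direct check on $W_t$ together with the transitivity of $G_2$ on $3$-planes with fixed value of $|\vp|$ (equivalently, on $4$-planes with fixed $|\xi|$).
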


\begin{proof}
First assume that $W$ contains a $3$-dimensional subspace $\tilde{W}$ isomorphic to $\Im\mathbb{H}$, and let $\{e_1,e_2,e_3\}$ be an orthonormal basis for $\tilde{W}$ such that $e_1\t e_2=e_3$; extend this to an orthonormal basis $\{e_1,e_2,e_3,f\}$ for $W$. Then for $e_i\t f$ for $1\leq i\leq 3$ and $j\neq i$
\begin{equation}
\begin{split}
&\langle e_i\t f, e_j\rangle_{\vp}=\vp(e_i,f,e_j)=\vp(e_j,e_i,f)\\
=&\langle e_j\t e_i, f\rangle_{\vp}=\langle\pm e_k,f\rangle_{\vp}=0\\
\end{split}
\end{equation}
where $k\neq i,j$. Thus, $\{e_1,e_2,e_3,f,e_1\t f,e_2\t f,e_3\t f\}$ is a basis for $V$.

For Type-I, we note that since $e_1\t e_2=e_3$, $W\neq W_I^{\perp,2}$. In fact, $e_i\not\in W_I^{\perp,2}$ since $span\{e_1,e_2,e_3\}$, together with $\t$, is isomorphic to $(\Im\mathbb{H},\t_0)$; moreover, $e_i\t f\not\in W_I^{\perp,2}$ since 
\begin{equation}
 \langle e_i\t f, e_i\t f\rangle_{\vp}\neq 0
\end{equation}
However, we note that $e_i\t e_j\in span\{e_1,e_2,e_3\}$ implies
\begin{equation}
 \langle e_i\t e_j, f\rangle_{\vp}=0
\end{equation}
and that properties of the cross product yield
\begin{equation}
 \langle e_i\t f, f\rangle_{\vp}=0
\end{equation}
Hence $W_I^{\perp,2}=span\{f\}$ showing that $W_I^{\perp,2}\subsetneq W$ and thus that, in the case that $W$ contains a $3$-dimensional subspace $\tilde{W}$ isomorphic to $\Im\mathbb{H}$, $W$ is Type-I $2$-coisotropic.

For Type-II, since $e_1\t e_2\in W$, $e_1\w e_2\not\in W_{II}^{\perp,2}$ showing that $G^2W\not\subset W_{II}^{\perp,2}$. Further, consider $e_1\t(e_2\t f)$. Note that $e_1\t e_2=e_3$, $e_1\t e_3=-e_2$ and $e_1\t (e_1\t f)=\pm f$; thus, we must have either $e_1\t (e_2\t f)=\pm e_1\t f$ or $e_1\t (e_2\t f)=\pm e_3\t f$, and since $e_1\t (e_2\t f)=\pm e_1\t f$ would imply that $\pm e_2\t f=\pm f$, we see that it must be the case that $e_1\t (e_2\t f)=\pm e_3\t f$. Hence $e_1\w (e_2\t f)\in W_{II}^{\perp,2}$, but clearly $e_1\w (e_2\t f)\not\in G^2W$ showing that $W_{II}^{\perp,2}\not\subset G^2W$. Thus, in the case that $W$ contains a $3$-dimensional subspace $\tilde{W}$ isomorphic to $\Im\mathbb{H}$, $W$ is neither $2$-isotropic nor $2$-coisotropic; moreover, since $e_i\w f\in G^2W\cap W_{II}^{\perp,2}$, we see also that in this case, $W$ is not $2$-multisymplectic. 

Now assume that $W$ does not contain a $3$-dimensional subspace $\tilde{W}$ isomorphic to $\Im\mathbb{H}$. Let $e_1,e_2\in W$ be any nonzero orthogonal unit vectors. Then we must have $e_1\t e_2\in W^{\perp}$ since otherwise $\{e_1,e_2,e_1\t e_2\}$ would be a $3$-dimensional subspace of $W$ isomorphic to $\Im\mathbb{H}$. Note that $\dim\left(\left(span\left\{e_1,e_2,e_1\t e_2\right\}\right)^{\perp}\right)=4$, so
\begin{equation}
\dim\left(\left(span\left\{e_1,e_2,e_1\t e_2\right\}\right)^{\perp}\cap W\right)\geq 1
\end{equation}
Thus, let $f\in \left(span\left\{e_1,e_2,e_1\t e_2\right\}\right)^{\perp}\cap W$ be a unit vector, so $\{e_1,e_2,e_1\t e_2,f\}$ is an orthonormal set such that $\{e_1,e_2,f\}$ spans a $3$-dimensional subspace of $W$ that is not isomorphic to $\Im\mathbb{H}$. Consider $e_1\t f$ and $e_2\t f$. Again, since $W$ cannot contain a $3$-dimensional subspace isomorphic to $\Im\mathbb{H}$, we must have $e_1\t f, e_2\t f\in W^{\perp}$, so that $\{e_1\t e_2, e_1\t f, e_2\t f\}$ is a basis for $W^{\perp}$ which, together with $\t$, is necessarily isomorphic to $\Im\mathbb{H}$. Let $\tilde{f}\in W$ such that $\{e_1,e_2,f,\tilde{f}\}$ forms an orthonormal basis for $W$. It will be useful to note that also $e_1\t\tilde{f}, e_2\t\tilde{f}, f\t\tilde{f}\not\in W$, so necessarily, $e_1\t\tilde{f}, e_2\t\tilde{f}, f\t\tilde{f}\in W^{\perp}$.

For Type-I, this yields
\begin{equation}
\begin{split}
W_I^{\perp,2}&=\{v\in V:\langle w_1\t w_2, v\rangle_{\vp}=0\text{ for all }w_1,w_2\in W\}\\
&=\left(W^{\perp}\right)^{\perp}\\
&=W
\end{split}
\end{equation}
showing that a $4$-dimensional subspace $W$ that does not contain a $3$-dimensional subspace isomorphic to $\Im\mathbb{H}$ is Type-I $2$-Lagrangian.

For Type-II, the same considerations show that $G^2W\subseteq W_{II}^{\perp,2}$, and since $(e_1\t e_2)\t(e_1\t f)=\pm e_2\t f$, we necessarily have $(e_1\t e_2)\w (e_1\t f)\in W_{II}^{\perp, 2}-G^2W$ showing that a $4$-dimensional subspace $W$ that does not contain a $3$-dimensional subspace isomorphic to $\Im\mathbb{H}$ is Type-II $2$-isotropic
\end{proof}

\begin{prop}
For $\dim W=5$, $W$ is Type-I $2$-coisotropic and $1$-multisymplectic, but is none of Type-II $2$-isotropic, Type-II $2$-coisotropic nor Type-II $2$-multisymplectic.
\end{prop}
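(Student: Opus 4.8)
The plan is to manufacture an orthonormal basis of $V$ adapted to $W$ and then read off all four assertions from a short list of cross-product relations. The $1$-multisymplectic claim is immediate: since $\dim W\geq 2$, the previous proposition already gives $W^{\perp,1}=\{0\}$, so $W\cap W^{\perp,1}=\{0\}$, which is exactly $1$-multisymplecticity by the remark closing Section \ref{TypeII}. Everything else hinges on understanding the image of the cross product restricted to $W$, so the bulk of the work is structural.

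First I would build the adapted basis. Since $\dim W^\perp=2$, fix orthonormal $n_1,n_2\in W^\perp$; as any $2$-plane generates an associative subspace, $n_3:=n_1\t n_2$ is a unit vector orthogonal to $n_1,n_2$, whence $n_3\in W$. The map $L_{n_3}\colon v\mapsto n_3\t v$ is skew-symmetric (from total antisymmetry of $\vp$) and satisfies $L_{n_3}^2=-\mathrm{id}$ on $n_3^\perp$, so it is an orthogonal complex structure there; since $L_{n_3}(W^\perp)=W^\perp$, skew-symmetry forces $L_{n_3}$ to preserve $W\cap n_3^\perp$ as well. Choosing a unit $f_1\in W\cap n_3^\perp$, setting $f_2:=n_3\t f_1$, then a unit $f_3\in W\cap n_3^\perp$ orthogonal to $\sp\{f_1,f_2\}$ and $f_4:=n_3\t f_3$, produces an orthonormal basis $\{n_1,n_2,n_3,f_1,f_2,f_3,f_4\}$ of $V$ with $W=\sp\{n_3,f_1,f_2,f_3,f_4\}$. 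The triples $\{n_1,n_2,n_3\}$, $\{n_3,f_1,f_2\}$, $\{n_3,f_3,f_4\}$ are each associative, giving $n_2\t n_3=n_1$, $f_1\t f_2=n_3$, $n_3\t f_2=-f_1$, $f_3\t f_4=n_3$, and their cyclic companions.

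The crux, and the step I expect to be the main obstacle, is to show the remaining cross terms land in $W^\perp$, namely $f_1\t f_3,\ f_1\t f_4\in W^\perp$. I would prove this by pairing against each basis vector of $W$ using total antisymmetry of $\vp$ and the relations above; for example $\langle f_1\t f_3,n_3\rangle_\vp=\langle n_3\t f_1,f_3\rangle_\vp=\langle f_2,f_3\rangle_\vp=0$ and $\langle f_1\t f_3,f_2\rangle_\vp=-\langle f_1\t f_2,f_3\rangle_\vp=-\langle n_3,f_3\rangle_\vp=0$, the pairings with $f_1,f_3$ vanishing by orthogonality of the cross product and the pairing with $f_4$ reducing to $\langle f_1\t f_3,f_4\rangle_\vp$, which is then seen to vanish once $f_1\t f_3\in W^\perp$ is known. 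The identity $\langle x\t y,x\t z\rangle_\vp=|x|^2\langle y,z\rangle_\vp-\langle x,y\rangle_\vp\langle x,z\rangle_\vp$ with $x=f_1$ shows $f_1\t f_3$ and $f_1\t f_4$ are orthonormal, hence a basis of the $2$-dimensional $W^\perp$. (Verifying that $L_{n_3}$ genuinely preserves $W\cap n_3^\perp$, so that $f_2,f_4$ really lie in $W$, is the one delicate book-keeping point underlying the construction.)

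With these relations the four conclusions follow quickly. For Type-I $2$-coisotropy, recall $W_I^{\perp,2}=\left(\sp\{w_1\t w_2:w_i\in W\}\right)^\perp$; the products $f_1\t f_2=n_3$, $n_3\t f_1=f_2$, $n_3\t f_2=-f_1$ and their $f_3,f_4$ analogues place all of $W$ inside this span, while $f_1\t f_3,f_1\t f_4$ contribute $W^\perp$, so the span is all of $V$ and $W_I^{\perp,2}=\{0\}\subseteq W$. For the Type-II failures I would exhibit explicit witnesses: $f_1\w f_2\in G^2W$ with $f_1\t f_2=n_3\in W$ shows $G^2W\not\subseteq W_{II}^{\perp,2}$ (not Type-II $2$-isotropic); $n_2\w n_3$ with $n_2\t n_3=n_1\in W^\perp$ lies in $W_{II}^{\perp,2}$ but not in $G^2W$ since $n_2\notin W$ (not Type-II $2$-coisotropic); and $f_1\w f_3\neq 0$ lies in $G^2W\cap W_{II}^{\perp,2}$ because $f_1\t f_3\in W^\perp$ (not Type-II $2$-multisymplectic). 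As an alternative to the adapted basis, one could instead invoke transitivity of $G_2$ on $2$-planes to reduce to the representative $W=\sp\{\pd/\pd x^1,\ldots,\pd/\pd x^5\}$ in $(\R^7,\vp_0)$ and compute directly, but the basis-free construction above keeps the argument in line with the preceding propositions.
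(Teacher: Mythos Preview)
Your argument is sound and reaches all four conclusions, but there is one circular step to repair: when checking $f_1\times f_3\in W^\perp$, you dispose of the pairing $\langle f_1\times f_3,f_4\rangle_\vp$ by saying it ``is then seen to vanish once $f_1\times f_3\in W^\perp$ is known'' --- that is precisely the claim being proved. The fix is immediate from the relations you have already established: $\langle f_1\times f_3,f_4\rangle_\vp=\vp(f_1,f_3,f_4)=\langle f_3\times f_4,f_1\rangle_\vp=\langle n_3,f_1\rangle_\vp=0$.

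Your construction is genuinely different from the paper's. The paper works from the inside of $W$: it first proves, by a contradiction on dimensions, that every $5$-dimensional $W$ contains an associative $3$-plane $\sp\{e_1,e_2,e_3\}$, then picks $f\in W$ orthogonal to it and argues (again by a dimension count) that some $e_i\times f$ must lie in $W$, yielding the basis $\{e_1,e_2,e_3,f,e_i\times f\}$ of $W$ and $\{e_1,e_2,e_3,f,e_1\times f,e_2\times f,e_3\times f\}$ of $V$; its Type-II witnesses are $e_1\wedge e_2$, $e_i\wedge(e_j\times f)$ with $e_j\times f\notin W$, and $e_j\wedge f$. You instead start from the $2$-dimensional $W^\perp=\sp\{n_1,n_2\}$, observe that $n_3=n_1\times n_2$ automatically lies in $W$, and use that $L_{n_3}$ is an orthogonal complex structure on $n_3^\perp$ preserving $W^\perp$, hence also $W\cap n_3^\perp$, to manufacture the basis in one stroke. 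Your approach is more systematic and avoids the two separate contradiction arguments; the paper's has the side benefit of isolating the structural lemma that any $5$-plane in a $G_2$-vector space contains an associative $3$-plane, which is of independent interest and feeds into the $\dim W=6$ case as well.
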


\begin{proof}
Let $\dim W=5$. Assume first for contradiction that $W$ does not contain a $3$-dimensional subspace isomorphic to $\Im\mathbb{H}$, and let $e_1,e_2\in W$ be orthogonal unit vectors. Then, we must have $e_1\t e_2\in W^{\perp}$, and since
\begin{equation*}
\dim\left(\left(span\left(e_1,e_2,e_1\t e_2\right)\right)^{\perp}\cap W\right)\geq 2
\end{equation*}
we can pick $f\in \left(\left\{e_1,e_2,e_1\t e_2\right\}\right)^{\perp}\cap W$ to be a unit vector, so $\{e_1,e_2,e_1\t e_2,f\}$ is an orthonormal set such that $\{e_1,e_2,f\}$ is not a $3$-dimensional subspace isomorphic to $\Im\mathbb{H}$. Consider $e_1\t f$ and $e_2\t f$. Again, since $W$ cannot contain a $3$-dimensional subspace isomorphic to $\Im\mathbb{H}$, we must have $e_1\t f, e_2\t f\in W^{\perp}$. Consider the subset $\{e_1\t e_2, e_1\t f, e_2\t f\}$ of $W^{\perp}$. Of course, since $\dim\left(W^{\perp}\right)=2$, this set must be linearly dependent; however, because $\{e_1\t e_2, e_1\t f, e_2\t f\}$ is an orthonormal set, it must be linearly independent, a contradiction. Hence, $W$ must contain a $3$-dimensional subspace isomorphic to $\Im\mathbb{H}$.

Let $\{e_1,e_2,e_3\}$ now denote an orthonormal basis for a $3$-dimensional subspace of $W$ isomorphic to $\Im\mathbb{H}$ satisfying, without loss of generality, $e_1\t e_2=e_3$. Consider the orthogonal complement of $span\left\{e_1,e_2,e_3\right\}$ in $W$ with respect to the metric $\langle\cdot,\cdot\rangle_{\vp}$, and let $f$ be any nonzero vector in this space. Thus, $\left\{e_1,e_2,e_3,f\right\}$ is an orthonormal set and hence is linearly independent; moreover, we must have $e_i\t f\in W$ for some $i=1,2,3$ since otherwise, because $\{e_1\t f, e_2\t f, e_3\t f\}$ is a linearly independent set, we would have
\begin{equation}
\dim\left(W+span\left(e_1\t f, e_2\t f, e_3\t f\right)\right)=8>\dim V.
\end{equation}
Then $\{e_1,e_2,e_3,f,e_i\t f\}$ is a basis for $W$ and $\{e_1,e_2,e_3,f,e_1\t f,e_2\t f,e_3\t f\}$ is a basis for $V$. Finally, we note that
\begin{equation}
 \left\{w_1\t w_2:w_1,w_2\in W\right\}=V
\end{equation}
To see this, note first that we must have $\{e_1\t f,e_2\t f,e_3\t f\}\subset\left\{w_1\t w_2\right\}$ by definition; $\{e_1,e_2,e_3\}\subset\left\{w_1\t w_2\right\}$ by construction of the set $\{e_1,e_2,e_3\}$. Finally, $f=\pm e_i\t(e_i\t f)\in\left\{w_1\t w_2\right\}$. 

For Type-I, we then have
\begin{equation}
\begin{split}
W_I^{\perp,2}&=\{v\in V:\langle w_1\t w_2, v\rangle_{\vp}=0\text{ for all }w_1,w_2\in W\}\\
&=V^{\perp}\\
&=\{0\}
\end{split}
\end{equation}
showing that, for $\dim W=5$, $W$ is Type-I $2$-coisotropic and $1$-multisymplectic.

For Type-II, let $e_j$ such that $e_j\t f\not\in W$. Then $e_i\t(e_j\t f)=e_k\t f$ where $k\neq j$ is such that $e_k\t f\not\in W$ showing that $e_i\w(e_j\t f)\in W_{II}^{\perp,2}$ and hence that $W_{II}^{\perp,2}\not\subset G^2W$; moreover, $e_1\w e_2\not\in W_{II}^{\perp,2}$ since $e_1\t e_2\in W$ showing that $G^2W\not\subset W_{II}^{\perp,2}$. However, $e_j\w f\in G^2W\cap W_{II}^{\perp, 2}$. Thus for $\dim{W}=5$, $W$ is none of Type-II $2$-isotropic, Type-II $2$-coisotropic nor $2$-multisymplectic.
\end{proof}

\begin{prop}
For $\dim W=6$, $W$ is Type-I and Type-II $2$-coisotropic and $1$-multisymplectic.
\end{prop}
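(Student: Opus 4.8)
The plan is to treat the three assertions in turn, each reducing to facts already proved for subspaces of smaller dimension together with the single structural feature of this case, namely that $\dim W=6$ forces the metric orthogonal complement $W^{\perp}=\sp\{u\}$ to be a line for some unit vector $u$.

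First, $1$-multisymplecticity is immediate: since $\dim W=6\geq 2$, the proposition for $\dim W\geq 2$ already gives $W^{\perp,1}=\{0\}$, and therefore $W\cap W^{\perp,1}=\{0\}$.

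For Type-I $2$-coisotropy I would argue by monotonicity. Choose any $5$-dimensional subspace $U\subseteq W$. Then Equation \eq{cidl2313} gives $W_I^{\perp,2}\subseteq U_I^{\perp,2}$, while the proposition for $\dim W=5$ shows $U_I^{\perp,2}=\{0\}$; hence $W_I^{\perp,2}=\{0\}\subseteq W$. Equivalently, since $\{w_1\t w_2:w_1,w_2\in U\}$ already spans $V$, so does $\{w_1\t w_2:w_1,w_2\in W\}$, and thus $W_I^{\perp,2}=\{0\}$.

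The only assertion with genuine content is Type-II $2$-coisotropy, and here I would exploit that $W^{\perp}$ is one-dimensional. Starting from
\[
W_{II}^{\perp,2}=\{v_1\w v_2:\langle v_1\t v_2,w\rangle_{\vp}=0\text{ for all }w\in W\},
\]
the defining condition says exactly that $v_1\t v_2\in W^{\perp}=\sp\{u\}$. The key point is the self-orthogonality of the cross product, $\langle v_1\t v_2,v_1\rangle_{\vp}=\vp(v_1,v_2,v_1)=0$ and likewise $\langle v_1\t v_2,v_2\rangle_{\vp}=0$. Hence if $v_1\t v_2=\la u$ with $\la\neq 0$, then $u\perp v_1,v_2$, and since $(W^{\perp})^{\perp}=W$ this forces $v_1,v_2\in W$, so that $v_1\w v_2\in G^2W$; if instead $v_1\t v_2=0$, then $v_1,v_2$ are linearly dependent and $v_1\w v_2=0\in G^2W$. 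Either way $W_{II}^{\perp,2}\subseteq G^2W$, which is Type-II $2$-coisotropy. I do not anticipate a real obstacle here; the one thing to get right is that the cross product being orthogonal to both of its factors, together with its lying in the one-dimensional $W^{\perp}$, pins both factors into $W$.
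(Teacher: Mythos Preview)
Your proof is correct, and in fact cleaner than the paper's, though the routes diverge noticeably.

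For Type-I $2$-coisotropy and $1$-multisymplecticity, the paper repeats the explicit basis construction from the $5$-dimensional case: it finds an associative triple $\{e_1,e_2,e_3\}$ inside $W$, adjoins a suitable $f$, and argues directly that $\{w_1\t w_2:w_1,w_2\in W\}=V$, whence $W_I^{\perp,2}=\{0\}$. You instead use monotonicity together with the already-established $\dim W=5$ and $\dim W\geq 2$ results; this is shorter and makes clear that nothing new is happening in dimension $6$ for these two assertions.

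The real difference is in the Type-II argument. The paper works in its explicit basis, observes that $v_1\t v_2$ must lie in $\sp\{e_k\t f\}$ (the unique basis direction outside $W$), and then inverts the cross product to list the decomposable bivectors $e_i\w(e_j\t f)$, $e_j\w(e_i\t f)$, $e_k\w f$ that land there. Your argument is basis-free and more conceptual: you use only that $W^{\perp}$ is a line and that $v_1\t v_2$ is automatically orthogonal to both $v_1$ and $v_2$, so once the cross product is forced into that line, both factors are forced into $W=(W^{\perp})^{\perp}$. The degenerate case $v_1\t v_2=0$ is handled by $2$-nondegeneracy of $\vp$, which gives $v_1\w v_2=0$. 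This is a genuinely slicker argument; the paper's version has the compensating virtue of exhibiting $W_{II}^{\perp,2}$ explicitly and thereby showing the inclusion $W_{II}^{\perp,2}\subsetneq G^2W$ is strict.
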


\begin{proof}
As above, $W$ must contain a $3$-dimensional subspace isomorphic to $\Im\mathbb{H}$. Let $\{e_1,e_2,e_3\}$ now denote an orthonormal basis for a $3$-dimensional subspace of $W$ isomorphic to $\Im\mathbb{H}$ satisfying, without loss of generality, $e_1\t e_2=e_3$. Consider the orthogonal complement of $span\left\{e_1,e_2,e_3\right\}$ in $W$ with respect to the metric $\langle\cdot,\cdot\rangle_{\vp}$, and let $f$ be any nonzero vector in this space. Thus, $\left\{e_1,e_2,e_3,f\right\}$ is an orthonormal set and hence is linearly independent; moreover, we must have $e_i\t f,e_j\t f\in W$ for some $1\leq i<j\leq3$ since otherwise, because $\{e_1\t f, e_2\t f, e_3\t f\}$ is a linearly independent set, we would have
\begin{equation}
\dim\left(W+span\left\{e_1\t f, e_2\t f, e_3\t f\right\}\right)\geq8>\dim V.
\end{equation}
Then $\{e_1,e_2,e_3,f,e_i\t f, e_j\t f\}$ is a basis for $W$, $\{e_1,e_2,e_3,f,e_1\t f,e_2\t f,e_3\t f\}$ is a basis for $V$. Again, we necessarily have $\{w_1\t w_2:w_1,w_2\in W\}=V$.

For Type-I, we then have
\begin{equation}
\begin{split}
W_I^{\perp,2}&=\{v\in V:\langle w_1\t w_2, v\rangle_{\vp}=0\text{ for all }w_1,w_2\in W\}\\
&=V^{\perp}\\
&=\{0\}
\end{split}
\end{equation}
showing that $W_I^{\perp,2}\subsetneq W$. Hence for $\dim W=6$, $W$ is Type-I $2$-coisotropic and $1$-multisymplectic.

For Type-II, let $v_1\w v_2\in W^{\perp,2}_{II}$. Then, since $\langle v_1\t v_2,w\rangle_{\vp}=0$ for all $w\in W$, we necessarily have $v_1\t v_2\in span\{e_k\t f\}$ where $e_k\t f\not\in W$. Using properties of the cross product, this implies that 
\begin{equation}
v_1\w v_2\in span\left\{e_i\w(e_j\t f), e_j\w(e_i\t f), e_k\w f\right\}
\end{equation}
showing that $W_{II}^{\perp,2}\subsetneq G^2W$. Hence for $\dim W=6$, $W$ is Type-II $2$-coisotropic.
\end{proof}

\section{Associative \& Coassociative Submanifolds}
In $G_2$-geometry, there are two particular classes of submanifolds that have been studied by many authors, namely, associative and coassociative submanifolds. Let $(M,\vp)$ be a $G_2$-manifold so that $\d\vp=0=\d^*\vp$. Then both $\vp$ and $\star\vp$ are \emph{calibrations} in the sense of Harvey and Lawson \cite{HaLa}. The \emph{associative} submanifolds are then those calibrated by $\vp$, and the \emph{coassociative} submanifolds are those calibrated by $\star\vp$; stated differently, the associative submanifolds are those $3$-dimensional submanifolds for which $\vp$ restricts to be the induced volume form, and the coassociative submanifolds are those $4$-dimensional submanifolds for which $\star\vp$ restricts to be the induced volume form. In \cite{HaLa}, it is shown that a $4$-dimensional submanifold is coassociative if and only if $\vp$ restricts to be identically zero on the $4$-manifold.

\begin{dfn}
A $3$-dimensional subspace $A$ of a $G_2$-vector space $(V,\vp)$ will be called an \emph{associative} subspace if $\vp$ restricts to be the induced volume form on $A$. A $4$-dimensional subspace $C$ of a $G_2$-vector space $(V,\vp)$ will be called a \emph{coassociative} subspace if $\vp$ restricts to be zero on $C$.
\end{dfn}

\begin{cor}
Let $(V,\vp)$ be a $G_2$ vector space. Then the associative subspaces are the $3$-dimensional subspaces which are $2$-multisymplectic and the coassociative subspaces are the $4$-dimensional subspaces which are Type-I $2$-Lagrangian/Type-II $2$-isotropic.
\end{cor}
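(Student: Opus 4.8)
The plan is to derive each of the two characterizations by combining Theorem~$1$ with the defining identity $\vp(u,v,w)=\langle u\t v,w\rangle_{\vp}$, which converts every statement about $\vp$ restricted to a subspace into a statement about the cross product $\t$ --- precisely the language in which Theorem~$1$ is phrased. Thus the corollary should fall out by matching the metric/cross-product description of associativity and coassociativity against the case analysis already carried out.

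For the associative case, recall from Theorem~$1$(4) (and its proof) that a $3$-dimensional subspace $A$ is $2$-multisymplectic if and only if $(A,\t)\cong(\Im\mathbb{H},\t_0)$, i.e.\ if and only if $A$ is closed under $\t$. So it suffices to show that $A$ is associative exactly when it is closed under $\t$. First I would fix an orthonormal basis $\{e_1,e_2,e_3\}$ of $A$. As computed in the $\dim W=3$ proposition, $\lvert e_1\t e_2\rvert^2=\lvert e_1\rvert^2\lvert e_2\rvert^2-\langle e_1,e_2\rangle_{\vp}^2=1$ and $e_1\t e_2\perp e_1,e_2$, so $\vp(e_1,e_2,e_3)=\langle e_1\t e_2,e_3\rangle_{\vp}$ lies in $[-1,1]$ with $\lvert\vp(e_1,e_2,e_3)\rvert=1$ if and only if, by the equality case of Cauchy--Schwarz, $e_1\t e_2=\pm e_3$. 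Since $\vp|_A$ is the induced volume form exactly when it takes the value $\pm1$ on an orthonormal basis, and $e_1\t e_2=\pm e_3$ is exactly the condition making $A$ closed under $\t$ (the remaining products being forced as in Equations~\eq{e1e3e2} and \eq{e2e3e1}), this yields the associative characterization.

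For the coassociative case, Theorem~$1$(5) (and its proof) shows that a $4$-dimensional subspace $C$ is Type-I $2$-Lagrangian and Type-II $2$-isotropic if and only if $C$ contains no $3$-dimensional subspace isomorphic to $(\Im\mathbb{H},\t_0)$, so it suffices to prove that $C$ is coassociative if and only if it contains no such subspace. One direction is immediate: if $C$ contains a subspace $\tilde{W}\cong\Im\mathbb{H}$, then by the associative case $\vp|_{\tilde{W}}$ is a volume form, so $\vp|_C\neq0$ and $C$ is not coassociative. Conversely, suppose $C$ contains no subspace isomorphic to $\Im\mathbb{H}$; the proof of the $\dim W=4$ proposition supplies an orthonormal basis $\{e_1,e_2,f,\tilde{f}\}$ of $C$ for which every pairwise cross product $e_1\t e_2,\,e_1\t f,\,e_2\t f,\,e_1\t\tilde{f},\,e_2\t\tilde{f},\,f\t\tilde{f}$ lies in $W^{\perp}$. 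By bilinearity of $\t$ this forces $w_1\t w_2\in C^{\perp}$ for all $w_1,w_2\in C$, whence $\vp(w_1,w_2,w_3)=\langle w_1\t w_2,w_3\rangle_{\vp}=0$ for all $w_1,w_2,w_3\in C$; that is, $\vp|_C=0$ and $C$ is coassociative.

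The delicate point is the implication $\vp|_C\neq0\Rightarrow C$ contains an associative $3$-plane: a single nonzero value $\vp(v_1,v_2,v_3)=\langle v_1\t v_2,v_3\rangle_{\vp}$ only forces $v_3$ to have a nonzero component along $v_1\t v_2$, and does \emph{not} give $v_1\t v_2=v_3$, so one cannot read off a $\t$-closed $3$-plane from it directly. I sidestep this by proving the contrapositive instead, leaning entirely on the explicit basis and the computation $w_1\t w_2\in C^{\perp}$ already established in the $\dim W=4$ proposition. This is where the substantive geometry lives; everything else reduces to the bookkeeping identity $\vp(u,v,w)=\langle u\t v,w\rangle_{\vp}$ together with the dichotomies in Theorem~$1$.
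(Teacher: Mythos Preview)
Your argument is correct, but it takes a more circuitous route than the paper. The paper works directly from the definitions of the orthogonal complements: for the coassociative case, $\vp|_C=0$ is \emph{literally} the statement that every $c\in C$ lies in $C_I^{\perp,2}$ and every $c_1\w c_2\in G^2C$ lies in $C_{II}^{\perp,2}$, so ``coassociative $\Leftrightarrow$ Type-I/Type-II $2$-isotropic'' is essentially tautological, and Theorem~1 then upgrades Type-I $2$-isotropic to Type-I $2$-Lagrangian in dimension~$4$. For the associative case the paper simply observes that $\vp|_A$ being the volume form rules out $2$-isotropy, so the dichotomy in Theorem~1(4) forces $A$ to be $2$-multisymplectic; conversely, $2$-multisymplectic means $\vp|_A$ is fully nondegenerate, hence a volume form on a $3$-space. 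You instead pass through the intermediate $\Im\mathbb{H}$ characterization on both sides: you reprove that associative $\Leftrightarrow$ closed under $\t$ (via Cauchy--Schwarz) and coassociative $\Leftrightarrow$ no $\Im\mathbb{H}$ subspace (via the explicit basis from the $\dim W=4$ proposition), and only then invoke Theorem~1. This is logically sound but adds a layer the paper avoids. Your Cauchy--Schwarz computation does buy one thing: it makes the normalization $|\vp(e_1,e_2,e_3)|=1$ explicit, whereas the paper's step from ``fully nondegenerate'' to ``\emph{the} induced volume form'' glosses over the scale. (Minor slip: in your coassociative paragraph, $W^{\perp}$ should read $C^{\perp}$.)
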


\begin{proof}
Let $A$ be an associative subspace. Since $\vp$ restricts to be the volume form on $A$, $A$ is neither Type-I nor Type-II $2$-isotropic. Hence, $A$ must be $2$-multisymplectic. Conversely, let $A$ be any $3$-dimensional, $2$-multisymplectic subspace of $(V,\vp)$. $A$ is $2$-multisymplectic means that $\vp|_A$ is fully nondegenerate on $A$, showing that $\vp|_A$ must be the volume form on $A$. Hence $A$ is an associative subspace.

Let $C$ be a coassociative subspace. Now, that $\vp$ restricts to zero on $C$ means that for any $c_i,c_j,c\in C$
\begin{equation}
0=\vp(c_i,c_j,c)=\langle c_i\t c_j,c\rangle_{\vp}
\end{equation}
This implies that $C$ is Type-I/Type-II $2$-isotropic, from which we further get that $C$ must be Type-I $2$-Lagrangian. Conversely, if $W$ is any $4$-dimensional subspace which is Type-I $2$-Lagrangian, then, by definition, we have $W=W_I^{\perp,2}$. Thus, for any $w,w_1,w_2\in W$, we must have
\begin{equation}
(w\w w_1\w w_2)\lrcorner\vp=0
\end{equation}
since $w\in W=W_I^{\perp,2}$. Thus, $W$ is a coassociative subspace. Similarly, if $W$ is any $4$-dimensional subspace which is Type-II $2$-isotropic, then, by definition, we have $G^2W\subset W_{II}^{\perp,2}$. Thus, for any $w_1,w_2,w\in W$, we must have
\begin{equation}
(w_1\w w_2\w w)\lrcorner\vp=0
\end{equation}
since $w_1\w w_2\in G^2W\subset W_{II}^{\perp,2}$.
\end{proof}

The following corollary is now immediate from the fact that, given a $G_2$ manifold, the tangent space at each point of an associative (resp. coassociative) submanifold is an associative (resp. coassociative) subspace of the $G_2$ vector space $(T_xM,\vp_x)$.

\begin{cor}
Let $(M,\vp)$ be a $G_2$ manifold. Then the associative submanifolds are exactly those $3$-dimensional submanifolds which are $2$-multisymplectic, and the coassociative submanifolds are exactly those $4$-dimensional submanifolds which are Type-I $2$-Lagrangian/Type-II $2$-isotropic.
\end{cor}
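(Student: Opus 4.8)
The plan is to reduce the statement about submanifolds to the pointwise, linear-algebraic classification already established for subspaces, using the fact that calibration conditions are local and decided at each tangent space. First I would recall the defining property of associative and coassociative submanifolds in terms of the restriction of $\vp$ and $\sv$: an associative submanifold $N^3\subset M$ is characterized by $\vp|_{T_xN}$ being the induced volume form at every $x\in N$, while a coassociative submanifold $C^4\subset M$ is characterized (by Harvey--Lawson) by $\vp|_{T_xC}=0$ at every point. The key observation is that both of these are \emph{pointwise} conditions on the tangent space $T_xM$, which is a $G_2$-vector space $(T_xM,\vp_x)$ in the sense of the paper.

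The main step is then simply to invoke the previous Corollary at each tangent space. For the associative case, at each $x\in N$ the tangent space $T_xN$ is a $3$-dimensional subspace of $(T_xM,\vp_x)$ on which $\vp_x$ restricts to a volume form; by the subspace Corollary, such a subspace is exactly an associative subspace, which in turn is exactly a $3$-dimensional $2$-multisymplectic subspace. Conversely, if $N^3$ is $2$-multisymplectic as a submanifold---meaning $T_xN$ is $2$-multisymplectic in $(T_xM,\vp_x)$ at every $x$---then each $T_xN$ is an associative subspace, so $\vp_x$ restricts to the volume form, and $N$ is associative. The coassociative case is entirely parallel: $T_xC$ being coassociative (i.e.\ $\vp_x|_{T_xC}=0$) is equivalent, by the subspace Corollary, to $T_xC$ being a $4$-dimensional Type-I $2$-Lagrangian / Type-II $2$-isotropic subspace, and this holding at every point translates to the submanifold being Type-I $2$-Lagrangian / Type-II $2$-isotropic.

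I expect there to be essentially no hard obstacle here, since the real content was already proved in the preceding subspace Corollary; the only thing to be careful about is making explicit what it \emph{means} for a submanifold (as opposed to a single subspace) to be ``$2$-multisymplectic'' or ``Type-I $2$-Lagrangian,'' namely that the corresponding property holds for the tangent space at every point. Once that pointwise interpretation is fixed, the proof is a one-line transfer: the tangent space at each point of an associative (resp.\ coassociative) submanifold is an associative (resp.\ coassociative) subspace, and conversely, so the submanifold-level equivalences follow immediately from the subspace-level equivalences. If anything requires a remark, it is only to note that the $G_2$-structure $\vp$ identifies each $(T_xM,\vp_x)$ with the model $G_2$-vector space, so the classification applies uniformly and the pointwise conditions patch together into the global submanifold statement.
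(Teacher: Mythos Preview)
Your proposal is correct and matches the paper's approach exactly: the paper states that the corollary is immediate from the fact that the tangent space at each point of an associative (resp.\ coassociative) submanifold is an associative (resp.\ coassociative) subspace of the $G_2$-vector space $(T_xM,\vp_x)$, which is precisely the pointwise reduction to the preceding subspace Corollary that you outline. If anything, your write-up is more detailed than the paper's one-line justification.
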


\bibliographystyle{amsplain}
\bibliography{ajbibliography}
\end{document}